\newtheorem{thm}{Theorem}[section]
\newtheorem{rem}{Remark}[section]
\newtheorem{mydef}{Definition}[section]
\newtheorem{lem}{Lemma}[section]
\newtheorem{prop}{Proposition}[section]
\newtheorem{cor}{Corollary}[section]
\newtheorem{ex}{Example}[section]
\numberwithin{equation}{section}
\def \E{{\rm I\kern-0.16em E}}
\def\P{{\rm I\kern-0.16em P}}
\def\F{{\rm I\kern-0.16em F}}
\def\B{{\rm I\kern-0.16em B}}
\def\C{{\rm I\kern-0.46em C}}
\def\G{{\rm I\kern-0.50em G}}
\def\D{{\rm I\kern-0.50em D}}
\newcommand{\R}{\mathbb{R}}
\newcommand{\N}{\mathbb{N}}
\newcommand{\KK}{\mathbb{K}}
\newcommand{\CC}{\mathbb{C}}
\newcommand{\Ci}{\mathrm{i}}
\newcommand{\PSO}{\text{PSO}}
\newcommand{\Ker}{\text{Ker}}
\newcommand{\BF}{\mathbf{\mathcal{F}}}
\newcommand{\Ann}{\text{Ann}}
\newcommand{\gr}[1]{{\color{blue} #1}}
\newcommand{\gdr}[1]{{\color{red} #1}}
\def\ind{\mathrel{\hbox{\rlap{%
\hbox to 7.5pt{\hrulefill}}\raise6.6pt\hbox{\eka\char'167}}}}
\begin{document}
\title{
\bf Polynomial Stein operators: a noncommutative algebra perspective
}

\author{Ehsan Azmoodeh 
	\thanks{Department of Mathematical Sciences,
		University of Liverpool, Liverpool L69 7ZL, United Kingdom.
		E-mail: \texttt{ehsan.azmoodeh@liverpool.ac.uk}}, \quad 
	Dario Gasbarra
	\thanks{Department of Mathematics and Statistics, University of Helsinki,  B314, FI-00014 Helsinki, Finland. E-mail: \texttt{dario.gasbarra@helsinki.fi}}, \quad 
	Robert E$.$ Gaunt \thanks{ Department of Mathematics, The University of Manchester, M13 9PL Manchester,  United Kingdom. E-mail:\texttt{ robert.gaunt@manchester.ac.uk}}
}

\pagestyle{plain}
\date{}
\maketitle 

\vspace{-10mm}

\abstract 


In this paper, we make a novel connection between Stein's method and noncommutative algebra by viewing polynomial Stein operators (Stein operators with polynomial coefficients) as elements of the first Weyl algebra. Through this connection we study the algebraic structure of classes of polynomial Stein operators. In the case of the standard Gaussian distribution, we provide a complete description of the corresponding class of polynomial Stein operators by (i) identifying it as a vector space over $\R$ with an explicit given basis and (ii) by showing that this class is a principal right ideal of the first Weyl algebra generated by the classical Gaussian Stein operator $\partial -x$, with $\partial$ denoting the usual differential operator. We also study the characterising property of polynomial Stein operators for the standard Gaussian distribution, and give examples of general classes of polynomial Stein operators that are characterising, as well as classes that are not characterising unless additional distributional assumptions are made. By appealing to a standard property of Weyl algebras, we shown that the non-characterising property possessed by a wide class of polynomial Stein operators for the standard Gaussian distribution is a consequence of a general result that is perhaps surprising from a probabilistic perspective: the intersection between the class of polynomial Stein operators for any two target distributions with holonomic densities or holonomic characteristic functions is non-trivial.

 \vskip0.3cm
\noindent {\bf Keywords}: Stein's method; non-commutative algebra; Weyl algebra; Gaussian integration by parts; holonomic function\\
\noindent \textbf{MSC 2010}: Primary 16S32; 60E05; Secondary  	60F05

\section{Introduction} 

We begin with the following definitions that play a pivotal role in this paper.
 The first definition concerning polynomial Stein operators is taken from our recent paper \cite{a-g-g-algebraic-stein-operators}. All random variables throughout the paper are real-valued.
\begin{mydef}\label{def:PSO}
Let $X$ be a (continuous) target random variable.	Let $T \ge1$. We say that a linear differential operator $S:=S_X = \sum_{t=0}^{T} p_t \partial^t $ acting on a suitable class $\mathcal{F}$ of functions is a \textbf{polynomial Stein operator} for $X$ if :
	\begin{itemize}
	 \item[(a)] $Sf\in L^1(X)$, 
	 \item[(b)] $\E \left[ S f(X) \right] =0$ for all $f \in \mathcal{F}$, and 
	 \item[(c)] the coefficients $p_t$, $t=0,1,\ldots,T$, are polynomial. 
	 \end{itemize}
	 By $\mathrm{PSO}_{\mathcal{F}}(X)$ we denote the set of all polynomial Stein operators, acting on a class $\mathcal{F}$ of functions, for the target random variable $X$. 
	 Hereafter, for the ease of notation, we write $\mathrm{PSO}_{\mathcal{F}}(X) = \mathrm{PSO}(X)$.
\end{mydef}


\begin{mydef}[\textbf{Characterising polynomial Stein operators}]\label{def:Charactrization}
 We say that an operator $S \in \mathrm{PSO}(Y)$ is a \textbf{characterising} Stein operator for the target random variable $Y$ if, for any given random variable $X$ such that $Sf \in L^1(\P_X)$ for all $f \in \mathcal{F}$, the condition
 \begin{equation*}
  \E \left[ Sf (X)\right]=0, \quad \forall \, f \in \mathcal{F}  
  \end{equation*}   
  implies $X  =_d Y$ (equality in distribution).
\end{mydef}

\begin{rem}
	{\rm Observe that $0\in\mathrm{PSO}(X)$ is a trivial (and non-characterising) Stein operator for the target random variable $X$.}
\end{rem}

\begin{rem}[\textbf{Note on the class of functions $\mathcal{F}$ I}]\label{remi}
	{\rm  Consider the Stein operator $S = \sum_{t=0}^{T} p_t \partial^t $ for the random variable $X$ supported on $I\subseteq\mathbb{R}$. In the Stein's method literature, a commonly used choice of test functions is the class $\mathcal{F}_{S,Y}$ which contains all functions $f\in C^T(I)$ such that $\mathbb{E}|X^j f^{(t)}(X)|<\infty$, $j=0,\ldots,m$, $T=0,\ldots,T$, where $m=\mathrm{max}_{0\leq t\leq T}\,\mathrm{deg}(p_t)$. 
    Examples of other suitable classes of functions and clarifications on the classes of functions used in this paper are given in Remark \ref{rem_cau} at the end of the Introduction.}
\end{rem}

Polynomial Stein operators lie at the heart of Stein's method \cite{stein} and the Nourdin-Peccati Malliavin-Stein method \cite{np09,n-p-book}, powerful techniques for proving quantitative probabilistic limit theorems. Indeed, the starting point of Stein's method for Gaussian approximation \cite{stein,chen,n-p-book} is the following characterisation of the standard Gaussian distribution, known as \emph{Stein's Lemma}. Consider the divergence operator $G=\partial-x$, where $\partial=\frac{\mathrm{d}}{\mathrm{d}x}$ denotes the usual differential operator. Then, a real-valued random variable $X$ is equal in distribution to $N\sim N(0,1)$ if and only if $\E[Gf(X)]=0$ for all absolutely continuous $f:\R\rightarrow\R$ such that $\E|f'(N)|<\infty$. 

Motivated by Stein's Lemma, \cite{stein} introduced the so-called Stein equation $G f(x)=h(x)-\mathbb{E}[h(N)]$, where $h:\mathbb{R}\rightarrow\mathbb{R}$ is a test function. Evaluating both sides of the Stein equation at a random variable of interest $W$ and taking expectations now yields the transfer principle
\begin{equation}\label{transfer}
\mathbb{E}[h(W)]-\mathbb{E}[h(N)]=\mathbb{E}[Gf_h(W)],    
\end{equation}
where $f_h(x)=-e^{x^2/2}\int_{-\infty}^x\{h(t)-\mathbb{E}[h(N)]\}e^{-t^2/2}\,dt$ solves the Stein equation  $G f(x)=h(x)-\mathbb{E}[h(N)]$. Many important probability metrics, including the Kolmogorov, total variation and Wasserstein distances, can be expressed as integral probability metrics of the form $\sup_{h\in\mathcal{H}}|\mathbb{E}[h(W)]-\mathbb{E}[h(N)]|$ where the supremum is taken over some class of functions $\mathcal{H}$, e.g., taking $\mathcal{H}_{\mathrm{K}}=\{\mathbf{1}_{\cdot\leq z}\,|\,z\in\mathbb{R}\}$ induces the Kolmogorov distance. Thus the problem of bounding the distance between the distributions of $W$ and $N$ with respect to an integral probability metric reduces to bounding the quantity $\mathbb{E}[Gf_h(W)]$ for a given $h\in\mathcal{H}$ and then taking the supremum over all $h\in\mathcal{H}$. This procedure has proven to be remarkably effective in the context of Gaussian approximation, partly on account of the fact that the quantity $\mathbb{E}[Gf_h(W)]$ is expressed as an expectation of just the random variable $W$; the Gaussianity of the limit random variable $N$ is encoded in the operator $G$. More generally, this procedure can be applied for other limit distributions, in which a suitable Stein operator $S$ (that is preferably shown to be characterising) takes the place of Stein's classical operator $G$.

 Stein operators have classically been used as part of Stein's method to establish explicit bounds in distributional approximations, finding applications in fields as diverse as random graph theory \cite{bhj92}, 
 algebra \cite{fr22}
 and number theory \cite{h09}.
In recent years, there has also been a growing trend in which Stein operators have found applications beyond proving quantitative limit theorems, with examples including relaxing the Gaussian assumption in shrinkage \cite{fathi}, new tests for goodness-of-fit \cite{steinstat} and new methods for parameter estimation \cite{ebner}. 

Whilst researchers have been successfully applying Stein's method for over 50 years, mysteries about the method itself abound. 
One such mystery (Q1) is: why does the Stein operator $G=\partial-x$ work so well for normal approximation? This is true for most other distributions in that, in the current literature, only one or two Stein operators tend to be useful in the context of distributional approximation for a given target distribution, even though there are infinitely many Stein operators for a given distribution. Related to this question, the following question is natural: can a description be given for a typical element of the class of polynomial Stein operators for a given distribution (Q2)? Another mystery surrounds the characterising nature of Stein operators. It is well-known that the standard Gaussian Stein operator $G=\partial-x$ is characterising in the sense of Definition \ref{def:Charactrization}. However, does this remain true for all operators in the class $\PSO(N)$ (Q3)? More generally, one can ask about the characterising nature of Stein operators for general continuous probability distributions.
Within the Stein's method community, speculations include: `all polynomial Stein operators are characterising, but it can just be difficult to prove this for a given Stein operator' and `Stein operators are characterising if and only if the distribution is determined by its moments.' In Theorem \ref{thm:PSO_Intersection_NonTrivial}, we in fact see that the characterising property enjoyed by many Stein operators in the literature cannot be taken for granted (even for distributions that are determine by their moments): all classes of polynomial Stein operators for distributions with a holonomic density or holonomic characteristic function contain non-trivial elements that are not characterising.



In this paper, we provide, at least partial, answers to questions (Q1), (Q2) and (Q3). We do so by making a novel connection between Stein's method and noncommutative algebra. In particular, we view polynomial Stein operators as elements of the first Weyl algebra, the ring of linear differential operators with polynomial coefficients (in one variable). Weyl algebras enjoy a rich algebraic structure and their properties are well-understood; we refer the reader to \cite{Bjork-Book, Coutinho-Book, gw-Book, mr-Book87, Lam-Book, cf-Book} and references therein for introductions and comprehensive accounts of the theory. For example, it is well-known that Weyl algebras are simple integral domains, Noetherian (left/right) and Ore extensions. Two-sided ideals of Weyl algebras have plain structure; however, the structure of one-sided ideals is less obvious. Forasmuch as being Noetherian implies all one-sided ideals are finitely generated; however, more delicately, a famous theorem by Stafford \cite{Stafford-2generators} says that every one-sided (left/right) ideal of Weyl algebras can be generated by two elements. With this powerful theory at hand, we are able to address questions (Q1), (Q2) and (Q3) 
and consequently gain entirely new perspectives on Stein's method, some of which are perhaps surprising from a probabilistic viewpoint. 

For (Q1), we identify $\PSO(N)$ as a principle right ideal of the first Weyl algebra generated by Stein's classical operator $G=\partial-x$ (see Theorem \ref{prop:PSO=Ideal}). Various interpretations have been given as to why $G$ is a natural object in Gaussian analysis via Stein's method, such as the the generator approach to Stein's method of \cite{b88,barbour2,gotze} in which $(\partial-x)\partial$ is recognised as the generator of an Ornstein-Uhlenbeck process with standard Gaussian stationary distribution, and the canoncial Stein operator viewpoint of \cite{ley}. Here, we provide another such interpretation, which we consider to be fundamental and rather mathematically satisfying. Moreover, due to the aforementioned Stafford's Theorem of \cite{Stafford-2generators}, the fact that every one-sided (left/right) ideal of Weyl algebras can be generated by two elements implies that for a given target random variable $X$, the class $\PSO(X)$ can be generated by two Stein operators (see Proposition \ref{prop:PSO_General_Targets}). This result explains why, for a given continuous distribution, one would expect that only one or two polynomial Stein operators would prove to be most useful in the context of distributional approximation, despite the fact that there are infinitely many Stein operators for a given distribution.

We consider it to be a fundamental question as to whether the ideal $\PSO(X)$, for a given continuous random variable $X$, is principle. In Section \ref{sec3.3}, we consider this question for the random variables $H_p(N)$, $p\geq1$, where $H_p$ is the probabilist's Hermite polynomial of degree $p$. These random variables are of fundamental importance in Gaussian analysis and Malliavin calculus, and appear as target distributions in the asymptotic theory of U-statistics (see \cite[Section 4.4]{k-b-Ustatistics} and \cite[Chapter 3]{lee-Ustatistics}); however, the Malliavin-Stein method has yet to be adapted to these distributions for $p\geq3$. In Examples \ref{ex:H_2(X)IsPrincipal}, \ref{ex:H_3(X)IsNotPrincipal} and \ref{ex:H_4(X)IsNotPrincipal} , we prove that the ideal  $\mathrm{PSO}(H_2(N))$ is principle, whilst the ideals $\mathrm{PSO}(H_3(N))$ $\mathrm{PSO}(H_4(N))$ are not principle; we conjecture that $\mathrm{PSO}(H_p(N))$ is principle if and only if $p=1,2$ (see Remark \ref{remprinciple}). 

For (Q2), we provide a complete algebraic description of the class $\PSO(N)$ (Theorem \ref{prop:PSO=Ideal}). This theorem includes the aforementioned result that $\PSO(N)$ is a principle right ideal of the first Weyl algebra generated by the operator $G=\partial-x$.  We also show that $\PSO(N)$ is a vector space over $\R$ with the operators $S(k,t)=H_{k}\partial^t-H_{k+t}\in\PSO(N)$, $k\geq0$, $t\geq1$, forming a basis. Note that $S(0,1)=G$. 
Whilst methods exist for obtaining families of Stein operators for a given target distribution (see the flexible approach to Stein's density method of \cite{ley,mrs21} and our algorithmic approach \cite{a-g-g-algebraic-stein-operators}), this is the first time in the literature that a complete description has been given for the class of polynomial Stein operators for a given target distribution. 
Interestingly, the fact that $S(k,t)\in\PSO(N)$, which yields a useful integration by parts formula, was exploited by \cite{b86} in order to derive asymptotic expansions for smooth functions of sums of independent random variables. We also remark that the Stein operators $S_m = S(m-1,1)= H_{m-1}\partial - H_m $ and $ L_m=S(0,m)= \partial^m - H_m $, $m\geq1$, were also introduced as natural generalisations of the operator $G$ in the delightful paper \cite{gr05}. 

In Section \ref{sec3.1}, we complement the algebraic descriptions of $\PSO(H_p(N))$ given in Sections \ref{sec3.2} and \ref{sec3.3} by making a connection between Stein's method and holonomic function theory, which allows us to provide simple conditions under which $\PSO(X)$ is non-trivial for a given target random variable $X$. As an example, in Corollary \ref{cor3.1}, we show that $\PSO(h(N))\not=\{0\}$ for any $h\in\mathbb{R}[x]$ (the ring of polynomials with real-valued coefficients).

The characterising problem (Q3) is the subject of Sections \ref{sec4} and \ref{secnew}. In Proposition \ref{cor:Hermite-Coeff-Is-Characterizing}, we show that the polynomial Stein operators $S_m=H_{m-1}\partial-H_m$, $m\geq1$, are characterising in the sense of Definition \ref{def:Charactrization}. This is a consequence of a more general result that all polynomial Stein operators for the standard Gaussian distribution that are first order linear differential operators are characterising (see again Proposition \ref{cor:Hermite-Coeff-Is-Characterizing}). We remark that the Stein operators $(\partial-x)\partial^n$, $n\geq0$, that is the Stein operators for the Gaussian distribution with coefficients with polynomial degree one, are seen to be characterising as a direct consequence of Proposition 2.1 of our recent paper \cite{a-g-g-Stein-charactrization}. We also show that the Stein operators $S(k,t)=H_{k}\partial^t-H_{k+t}\in\PSO(N)$, $k,t\geq1$, are characterising amongst the class of symmetric and infinitely divisible distributions (Corollary \ref{barops}). We conjecture that the additional assumptions of symmetric and infinitely divisible distributions are artefacts of our proof, and that these assumptions can be disposed of. However, it turns out that there are non-trivial elements of $\mathrm{PSO}(N)$ that are not characterising unless additional distributional assumptions are made; in part (d) of Example \ref{ex:Moment_Assumption_Crucial}, we give a general recipe for constructing such non-characterising polynomial Stein operators for the Gaussian distribution. This non-characterising property of elements of the class $\mathrm{PSO}(N)$ is in fact a special case of the result that if the characteristic functions of $X_1$ and $X_2$ are holonomic (satisfy a linear homogeneous differential equation with polynomial coefficients) or if the density functions of $X_1$ and $X_2$ are holonomic, then $\mathrm{PSO}(X_1)\cap\mathrm{PSO}(X_2)\not=\{0\}$ (see Theorem \ref{thm:PSO_Intersection_NonTrivial}). From a probabilistic point of view this result may seem surprising, but it is in fact a consequence of basic   properties of Weyl algebras.  This demonstrates the power of the connection between Stein's method and noncommutative algebra, and in this paper we have likely only scratched the surface of what can be achieved through this connection. We hope this paper inspires both probabilists and algebraists to further explore this seemingly fruitful new window on Stein's method.

\begin{rem}[\textbf{Note on the class of functions $\mathcal{F}$ II}]\label{rem_cau}
	{\rm We require that the class $\mathcal{F}$ is \textit{invariant} under any polynomial differential operator, that is to say $S \mathcal{F} \subseteq \mathcal{F}$ for any differential operator $S$ with polynomial coefficients. For probabilistic purposes, the following natural choices for function space $\mathcal{F}$ are available: (a) $\mathcal{F}=\R[x]$ the ring of all polynomials with real coefficients; (b) $\mathcal{F} = C^\infty_c (\R)$ the space of all infinitely differentiable functions with compact support;  (c)     the class of Schwartz functions $\mathcal{F} = \mathcal{S} (\R): =  \{  f \in C^\infty (\R) \, : \, 
			\sup_{x\in \R} \vert x^\alpha  \partial^\beta f (x) \vert < \infty, \, \forall \, \alpha,\beta \in \N_0         \}$, which includes the class $C^\infty_c(\R)$; and (d) $\mathcal{F}=P(\R):=\{  f \in C^\infty (\R) \, : \, 
			\sup_{x\in \R} \vert (1+x^\alpha)^{-1}  \partial^\beta f (x) \vert < \infty \,\text{for all $\beta \in \N_0$ and some $\alpha\geq1$} 
            \}$ the space of infinitely differentiable functions with polynomial growth. Note that these families of functions are invariant under any polynomial differential operator. 
			The Schwartz class $\mathcal{F} = \mathcal{S} (\R)$ is applicable to all target distributions. When target distributions have all moments (such as distributions in Wiener chaoses or probability distributions with bounded support), the classes $\mathcal{F}=\R[x]$ and  $\mathcal{F}=P(\R)$ are also available.
The class	$\mathcal{F}=\R[x]$ is of particular interest for target distributions that are moment-determined. It is classical that the standard Gaussian distribution is moment-determined; see e.g. \cite[Lemma 3.1.]{n-p-book}. The class $\mathcal{F}=P(\R)$ is useful in practical implementations of Stein's method, containing the real and and imaginary parts of the function $f(x)=e^{\mathrm{i}tx}$, $t\in\mathbb{R}$, and ensuring that, for $f$ in this class, $\E|X^\alpha\partial^\beta f(X)|<\infty$ for all $\alpha,\beta\in\N_0$, if $X$ has all moments. To keep the statements of our results concise, when considering an arbitrary target random variable, the shorthand notation $\PSO(X)$ can be safely considered to represent $\PSO_{\mathcal{F}}(X)$, where $\mathcal{F} = \mathcal{S}(\R)$.
 However, if all moments of the target random variable(s) exist (as is the case for the Gaussian distribution), then the results also apply for $\mathcal{F}=\R[x]$,  $\mathcal{F}=P(\R)$, and the class $\mathcal{F}_{S,Y}$ defined in Remark \ref{remi}.
}
\end{rem}


\section{Auxiliary background} \label{se:Background}

\subsection{Elements of Weyl algebra $A_n(\KK)$}\label{sec:Weyl_Algebra}

In this section, we collect some relevant basic definitions and results from noncommutative algebra; for a comprehensive treatment of the subject, the reader can consult the classical textbooks \cite{mr-Book87, cf-Book, gw-Book, Lam-Book}. We begin by recalling some definitions from ring theory, which will be used throughout this paper. We then introduce some basic theory for Weyl algebras and holonomic functions that will be required in this paper. Throughout this section, $\KK$ denotes a commutative field of characteristic zero. For our purposes, either $\KK=\R$, or $\KK=\mathbb{C}$ (the field of complex numbers). 


\begin{mydef}[\textbf{Ring}]\label{def:Ring}
A ring is a triplet $(R, +, \cdot)$ in which $R$ is a non-empty set equipped with two binary operations $+$ and $\cdot$ called addition and multiplication, respectively, such that:
\begin{itemize}
\item[(i)]  $(R,+)$ is an abelian group.
\item[(ii)] $(R,\cdot)$ is a monoid.
\item[(iii)] Addition and multiplication are related to each other via distributive laws:  $(a+b)\cdot c = a \cdot  b + b\cdot c$ and $ a \cdot (b+c) = a \cdot b + a \cdot c$.
\end{itemize}
The neutral element of $R$ under the addition operator is called zero and denoted by $0_R=0$, whereas the neutral element under the multiplication operator is called one or the unit-element and is denoted by $1_R=1$. The inverse of an element $a \in R$ under the addition operator is denoted by $-a$, i.e., $a+ (-a)=0$. The ring $R$ is called commutative if $a\cdot b = b\cdot a$ for all $a,b \in R$, and otherwise we say $R$ is noncommutative.  We often omit the multiplication sign and write $ab$ instead of $a \cdot b$.
\end{mydef}

\begin{mydef}[\textbf{Ideal}]\label{def:Ideal}
A left (right) ideal $I$ of a ring $R$ is a subgroup of $(R,+)$ such that $RI \subseteq I$ $(IR \subseteq I)$. We also write $I \lhd_l R $ and $I \lhd_r R$ to denote a left/right ideal. A two-sided ideal $I$ -- often referred to simply as an ideal -- is a left and right ideal simultaneously, that is  $RI \subseteq I $ and $IR \subseteq I$. We denote it by $I \lhd R$.	
\end{mydef}

We now recall a number of further fundamental concepts from ring theory:

\begin{mydef}\label{def:Basic_Ring_Theory}
	Let $R$ be a ring (not necessarily commutative).
	\begin{itemize}
		\item[(a)] The ring $R$ is an integral domain  	if the product of two non-zero elements is non-zero, i.e., $ab\neq0$ for every $0 \neq a ,b \in R$. 
		\item[(b)] The ring $R$ is simple if $0$ and $R$ are the only two-sided ideals of $R$. 
		\item[(c)] The ring $R$ is a  principal left (right) ideal ring if every left (right) ideal is principal (or generated by only one element).
		\item[(d)] A left (right) ideal $I \lhd_l R$ ($I \lhd_r R$) is called maximal if there is no left (right) proper ideal $J$ containing $I$.
		\item[(e)] The ring $R$ is prime if for every $0 \neq a,b \in R$ then $a R b \neq 0$. A left (right) ideal $I \lhd_l R$ ($I \lhd_r R$) is called a prime ideal if the quotient $R/I$ is a prime ring.  
		\item[(f)] The ring $R$ is called left (right) Noetherian if it satisfies the left (right) ascending chain condition, 
		namely that every chain of left (right) ideals $ I_1 \subseteq I_2\subseteq I_3\subseteq \cdots  $ must terminate after a finite step, i.e., there exists $n\in\N$ such that $I_n=I_{n+1}=\cdots$. 
	\end{itemize}	
\end{mydef}

A further fundamental concept from ring theory that will be needed in this paper is that of modules.

\begin{mydef}[\textbf{Module}]\label{def:Module}
Let $R$ be a ring (not necessarily commutative). An abelian group $(M,+)$ is called a left $R$-module if there is a mapping $\bullet :  R \times M \to M$ such that for all $a,b \in R$ and for all $m,m' \in M$:
\begin{itemize}
	\item[(i)]  $a \bullet (m+m') = a\bullet m + a \bullet m'$.
	\item[(ii)] $ (a+b)\bullet m = a \bullet m + b \bullet m$.
	\item[(iii)] $(ab) \bullet m = a \bullet (b\bullet m)$.
	\item[(iv)]  $1\bullet m = m$, where $1 = 1_R$.
 	\end{itemize} 
In brief, we write ${}_R M$ to say $M$ is a left $R$-module. A right $R$-module  is defined similarly, and denoted by $M_R$. A left $R$-module ${}_R M$ is said to be finitely generated if there is a finite set $X=\{ x_i \, : \, i \in I\} \subseteq M$, say of the cardinality $ \vert I \vert =n$, so that for every element $m \in M$ there are $a_1,\ldots,a_n \in R$ such that $m = a_1\bullet x_1 +\ldots+a_n \bullet x_n$. In this case we write ${}_R M = {}_R \langle X \rangle = {}_R \langle x_1,\ldots,x_n \rangle$. If $n=1$, that is when ${}_R M$ is generated by only one element, it is called a cyclic module.
\end{mydef}

\begin{ex}
(a) Let $R$ be a ring. Every left ideal $I \lhd_l R $ with the ring multiplication is a left $R$-module. Similarly, every right ideal 	$I \lhd_r R $ with the ring multiplication is a right $R$-module. (b) Let $a_1,\ldots,a_n \in R$. Then $ {}_R \langle a_1,\ldots,a_n \rangle = \sum_{i=1}^{n} Ra_i = \{  \sum_{i=1}^{n} r_i a_i \, : \, r_i \in R, \, 1 \le i \le n \}$ is a finitely generated left $R$-module. 
\end{ex}

Before moving on to some basic theory for Weyl algebras and holonomic functions we introduce the notion of an annihilator of a subset of a module over a ring, which will be used throughout this paper and also in our definition of holonomic functions later in this section.

\begin{mydef}[\textbf{Annihilator}]\label{def:Annihilator}
Let ${}_R M$ be a left $R$-module. Let $X \subseteq M$ be a subset of $M$. The annihilator of $X$, denoted by $\mathrm{Ann}_{R}(X)$, consists of those elements $r \in R$ such that for all elements $x \in X$, $r\bullet x =0$. In set notation,
\begin{equation*}
\mathrm{Ann}_R (X)  = \{    r \in R  \, : \,  r \bullet x =0 \,\, \forall \, x \in X \}.
\end{equation*}
\end{mydef}

We are now ready to define the Weyl algebra.

\begin{mydef}[\textbf{Weyl algebras}]\label{def:Weyl_Algebra}
	Let $n \in \N$ be a natural number. The $n$th Weyl algebra $A_n (\KK)$ is an associative $\KK$-algebra (a vector space over the field $\KK$ equipped with a bilinear operator which is associative, that is to say $r(a\cdot b) = (ra)\cdot b = a \cdot (rb) $ for every $r \in \KK$ and every $a,b \in A_n (\KK)$)  with generators $x_n,\ldots ,x_n$, $y_1,\ldots ,y_n$ subject to the relations 
	\begin{equation*}
	[x_i,x_j]=[y_i,y_j]=0, \quad [x_i,y_j]=\delta_{ij}, \quad \forall \, 1 \le i,j\le n,
	\end{equation*}
	where $[a,b]=ab-ba$ is the commutator of the elements $a$ and $b$, and $\delta_{ij}$ denotes  the Kronecker delta.  Let $\KK[x_1,\ldots ,x_n]$ be the ring of polynomials in $n$ variables $x_1,\ldots ,x_n$ with coefficients in $\KK$. It is classical that the $n$th Weyl algebra $A_n (\KK)$ is isomorphic (as a $\KK$-algebra) to the ring of differential operators with polynomial coefficients in $n$ variables by identifying each $y_i$ with $\partial_i = \partial_{x_i}$. Clearly,  $A_n(\KK)$ is a noncommutative ring.
\end{mydef}

The following theorem (see \cite[Theorem 1.15]{mr-Book87}) collects some algebraic properties of Weyl algebras. The right Ore property described in part (b) of Theorem \ref{thm:Weyl_Algebra_Property} will allow us to provide an efficient proof of Theorem \ref{thm:PSO_Intersection_NonTrivial} that concerns the non-characterising nature of classes of polynomial Stein operators.

\begin{thm}\label{thm:Weyl_Algebra_Property}
	\begin{itemize}
		\item[(a)] The Weyl algebra $A_n (\KK)$ is a simple Noetherian (left/right) integral domain, and hence is a prime ring.
		\item[(b)] The Weyl algebra $A_n (\KK)$ satisfies the left/right Ore property, namely that $A_n (\KK) L \cap  A_n (\KK) S \neq \{0\}$ for every two non-zero elements $L, S \in A_n (\KK)$.
	\end{itemize}
\end{thm}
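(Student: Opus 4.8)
The plan is to derive everything from two classical devices: the normal-ordering (Poincar\'e--Birkhoff--Witt) basis of $A_n(\KK)$ and the Bernstein filtration, with part (b) obtained as a corollary of part (a) via the standard fact that a one-sided Noetherian domain is one-sided Ore.

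First I would fix the $\KK$-basis of $A_n(\KK)$ given by the normal-ordered monomials $x^{\alpha}y^{\beta}$, $\alpha,\beta\in\N_0^n$; their linear independence follows at once from the faithful representation of $A_n(\KK)$ on $\KK[t_1,\dots,t_n]$ in which $x_i$ acts by multiplication by $t_i$ and $y_i$ by $\partial/\partial t_i$. Equip $A_n(\KK)$ with the Bernstein filtration $B_k=\mathrm{span}_{\KK}\{x^{\alpha}y^{\beta}:|\alpha|+|\beta|\le k\}$. Since $[x_i,y_j]=\delta_{ij}\in B_0=\KK$ lies in a strictly lower filtration layer than $x_iy_j\in B_2$, one gets $B_jB_k\subseteq B_{j+k}$ and, in the associated graded ring, the images of $x_i$ and $y_j$ commute, so $\mathrm{gr}^{B}A_n(\KK)\cong\KK[x_1,\dots,x_n,\xi_1,\dots,\xi_n]$, the polynomial ring in $2n$ variables. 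Part (a) then follows by transporting properties through $\mathrm{gr}^{B}$: (i) for nonzero $P,Q$ the principal symbols satisfy $\sigma(PQ)=\sigma(P)\sigma(Q)\neq0$ because the polynomial ring is a domain, hence $PQ\neq0$ and $A_n(\KK)$ is a domain; (ii) $\KK[x,\xi]$ is Noetherian by the Hilbert basis theorem, and the standard lemma that a filtered ring with left (resp.\ right) Noetherian associated graded ring is itself left (resp.\ right) Noetherian --- lift a finite set of homogeneous generators of $\mathrm{gr}^{B}(I)$ to $I$ and induct on filtration degree --- shows $A_n(\KK)$ is left and right Noetherian; (iii) any integral domain is prime, since $ab\in aA_n(\KK)b$ is nonzero whenever $a,b\neq0$.

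It remains to prove simplicity. Let $I\lhd A_n(\KK)$ be a nonzero two-sided ideal; it suffices to show $I\cap\KK^{\times}\neq\emptyset$, for then $I=A_n(\KK)$. Working in the normal-ordered basis, one checks that the derivations $\mathrm{ad}(y_i)=[y_i,\cdot]$ and $\mathrm{ad}(x_i)=[x_i,\cdot]$ act on $A_n(\KK)$ exactly as $\partial/\partial x_i$ and $-\,\partial/\partial y_i$; this uses only the Leibniz rule for $[y_i,\cdot]$ and $[x_i,\cdot]$ together with $[y_i,x_j]=\delta_{ij}$ and $[x_i,y_j]=-\delta_{ij}$. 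Being two-sided, $I$ is stable under these maps, and each of them strictly lowers the total degree of any nonconstant element. Choosing $0\neq P\in I$ of minimal total degree and repeatedly applying suitable $\mathrm{ad}(x_i)$ or $\mathrm{ad}(y_i)$ until the degree drops to zero produces a nonzero scalar in $I$, so $A_n(\KK)$ is simple.

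For part (b) it suffices to recall that a left Noetherian domain $R$ is left Ore: if $RL\cap RS=0$ for nonzero $L,S$, the ascending chain of left ideals $J_k=RS+RSL+\cdots+RSL^{k}$ cannot stabilise, because $J_{k+1}=J_k$ lets one factor a trailing $L$ out of the resulting relation, land in $RL$, intersect with $RS$, and conclude (using left cancellation, valid in a domain) that $J_k=J_{k-1}$ for $k\ge1$ and, at the bottom, $SL\in RS$; but $SL\in RL$ always, so $SL\in RL\cap RS=0$, contradicting $SL\neq0$. This contradicts the Noetherian property from part (a), so $RL\cap RS\neq0$; the right Ore statement follows by applying the same argument to the anti-automorphism of $A_n(\KK)$ fixing each $x_i$ and sending $y_i\mapsto-y_i$. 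I expect the only genuine effort here to be bookkeeping --- confirming that the normal-ordered monomials really form a basis (so $\mathrm{gr}^{B}A_n(\KK)$ is the \emph{full} polynomial ring, not a proper quotient), that $\mathrm{ad}(x_i),\mathrm{ad}(y_i)$ act precisely as the signed partials, and that the filtration-lifting induction and the descending-index cancellation argument go through without sign or index slips --- rather than anything conceptually deep.
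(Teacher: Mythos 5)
Your proof is correct, but note that the paper does not actually prove Theorem \ref{thm:Weyl_Algebra_Property} at all: it is stated as a collection of well-known facts and the reader is referred to the standard references (\cite{mr-Book87}, \cite{gw-Book}, \cite{Bjork-Book}, \cite{Lam-Book}, \cite{cf-Book}). What you have supplied is essentially the textbook proof, correctly assembled: the PBW basis via the faithful representation on $\KK[t_1,\dots,t_n]$, the Bernstein filtration with $\mathrm{gr}^{B}A_n(\KK)\cong\KK[x,\xi]$ yielding the domain and Noetherian properties (your principal-symbol multiplicativity and filtration-lifting induction are both sound), simplicity via the inner derivations $\mathrm{ad}(x_i)$, $\mathrm{ad}(y_i)$ acting as signed partials on normal-ordered monomials, and the Ore property deduced from Noetherian-plus-domain by the ascending-chain/descent argument on $J_k=RS+RSL+\cdots+RSL^k$, with the right-handed version obtained through the transpose anti-automorphism. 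Two minor points worth flagging explicitly: the simplicity step genuinely requires $\mathrm{char}\,\KK=0$ (the paper's standing assumption in Section \ref{sec:Weyl_Algebra}), since applying $\mathrm{ad}(y_i)$ to $x_i^{k}$ produces the coefficient $k$, which must be invertible for the degree-lowering descent to terminate at a \emph{nonzero} scalar; and your sign convention $[y_i,x_j]=\delta_{ij}$ is the opposite of the paper's stated relation $[x_i,y_j]=\delta_{ij}$ (though consistent with the differential-operator realization), which is harmless since the two presentations are isomorphic via $y_i\mapsto-y_i$, but you should fix one convention and keep it throughout.
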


In order to formulate some further fundamental results on Weyl algebras we will require the following definitions.

\begin{mydef}\label{def:Holonomic-Module}
\begin{itemize}
	\item[(a)]  {\rm{\textbf{(Bernstein filtration)}}}.  Let $\nu \in \N_0$. Let $\mathscr{F}_\nu$ be a $\KK$-vector space which is generated by elements $\{ x^\alpha \partial^\beta \, :\,  \vert \alpha \vert + \vert \beta \vert \le \nu  \}$, where for a multi-index $\alpha=(\alpha_1,\ldots,\alpha_n)$ we let $\vert \alpha \vert = \sum_{i=1}^{n} \alpha_i$. Clearly, each $\mathscr{F}_\nu$ is a finite dimensional $\KK$-vector space, $\mathscr{F}_0 \subset \mathscr{F}_1 \subset \cdots$, $\cup_{\nu=0}^{\infty} \mathscr{F}_\nu = A_n (\KK)$ and $\mathscr{F}_\nu \mathscr{F}_k \subset \mathscr{F}_{\nu+k}$. The family $\mathscr{F} = \{   \mathscr{F}_\nu  \, : \, \nu \ge 0  \}$ is called the Bernstein filtration for the Weyl algebra $A_n(\KK)$.  The direct sum $\mathrm{gr} \left( A_n(\KK) \right) : =  \mathscr{F}_0 \oplus \frac{\mathscr{F}_1}{\mathscr{F}_0} \oplus  \frac{\mathscr{F}_2}{\mathscr{F}_1} \oplus \cdots $ is called the associated graded ring.
	\item[(b)]  {\rm{\textbf{(Good filtration)}}}. Let $M$ be a left $A_n(\KK)$-module. A filtration $\Gamma = (\Gamma_\nu \, : \, \nu \ge 0)$ is an increasing sequence of finite dimensional vector subspaces on $\KK$ such that $M = \cup_{\nu=0}^{\infty} \Gamma_\nu$ and $ \mathscr{F}_k \Gamma_\nu \subset \Gamma_{\nu+k}$ for every pair $k$ and $\nu$. For a given filtration $\Gamma$, the associated graded $\mathrm{gr} \left( A_n(\KK)\right)$-module is given as $\mathrm{gr}_{\Gamma} M : = \Gamma_0 \oplus \frac{   \Gamma_1}{\Gamma_0} \oplus \frac{\Gamma_2}{\Gamma_1} \oplus \cdots$. A filtration $\Gamma$ is called a good filtration if the associated graded $\mathrm{gr}_{\Gamma} M$ is a finitely generated graded $\mathrm{gr} \left( A_n(\KK)\right)$-module.
	\item[(c)]  {\rm{\textbf{(Bernstein dimension)}}}. Let $\Gamma$ be a good filtration on a finitely generated left $A_n(\KK)$-module $M$. Then, it is well-known that there is $d \ge 0$ and rational numbers $a_0,\ldots,a_d$ so that $\mathrm{dim}_K \Gamma_\nu = a_d \nu^d + \cdots+a_0$ for $\nu \gg 0$. 
	The integer $d=: d_\Gamma(M) = d(M)$ is called the Bernstein dimension, and it is well-known that $d$ is independent of the choice of good filtration.
	\item[(d)] {\rm{\textbf{(Holonomic module)}}}. A finitely generated left $A_n(\KK)$-module $M$ is called a holonomic module if $d(M)=n$. It is well-known that for every non-zero finitely generated left $A_n(\KK)$-module $M$ it holds $d(M)\ge n$. This is known as Bernstein's inequality.
	\end{itemize}
\end{mydef}

In studying the algebraic structure of $\mathrm{PSO}(N)$ in Section \ref{sec3.2} we will employ the following fundamental results.

\begin{thm}[\textbf{Stafford's theorems}]\label{thm:Stafford}
	\begin{itemize}
		\item[(a)] Every left (right) ideal in the $n$th Weyl algebra $A_n(\KK)$ may be generated by two elements -- see e.g. 
		\cite[Theorem 3.1]{Stafford-2generators}. 
		\item[(b)] Every holonomic $A_n$-module is cyclic, i.e. generated by one element -- see e.g. \cite[Corollary 2.6]{Coutinho-Book}.
	\end{itemize}
\end{thm}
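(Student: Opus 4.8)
The plan is to derive both items from Stafford's work; in the body of the paper I would in fact simply quote \cite[Theorem 3.1]{Stafford-2generators} and \cite[Corollary 2.6]{Coutinho-Book}, but here is the route the argument takes.

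For part (a) the organising notion is the \emph{stable range} of $A_n(\KK)$: roughly, a ring $R$ has stable range $\le r$ when every length-$(r+1)$ unimodular row $(a_0,\dots,a_r)$ (a list generating the unit ideal) can be shortened to a unimodular row of length $r$ by elementary modifications of its entries. I would first assemble the available ingredients: by Theorem \ref{thm:Weyl_Algebra_Property}, $A_n(\KK)$ is a simple Noetherian domain, and by Definition \ref{def:Holonomic-Module} it carries the Bernstein (equivalently Gelfand--Kirillov) dimension, which is additive on short exact sequences and drops strictly on proper quotient modules. The soft half is a Bass-type estimate relating the stable range to the Krull dimension of $A_n(\KK)$ (which equals $n$); on its own this only produces generators for left ideals in a number that grows with $n$. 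The crucial step is Stafford's refinement, which exploits the \emph{simplicity} of $A_n(\KK)$ (and the fact that it is not a division ring) to perform one further reduction beyond the naive dimension count, forcing the stable range --- and hence the number of generators of any left ideal --- down to $2$ uniformly in $n$. Concretely, given a left ideal $I$ one picks $0\neq a\in I$, observes that $A_n(\KK)/A_n(\KK)a$ and its submodule $I/A_n(\KK)a$ are finitely generated of Bernstein dimension strictly below $d(A_n(\KK))=2n$, and runs a Forster--Swan-type count sharpened by simplicity. The right-ideal statement then follows by transporting everything through the canonical anti-automorphism of $A_n(\KK)$ (the formal transpose of differential operators), which interchanges left and right ideals.

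For part (b) I would argue as follows. First, a holonomic module $M$ has \emph{finite length} as an $A_n(\KK)$-module: in any strictly descending chain of submodules the successive quotients are nonzero and finitely generated, hence of Bernstein dimension $\ge n$ by Bernstein's inequality, so additivity of the leading coefficient of the Hilbert polynomial attached to a good filtration (the multiplicity $e(M)$) bounds the length of the chain by $e(M)$. Next, induct on the length $\ell$ of $M$: if $\ell=1$ then $M$ is simple, hence generated by any one of its non-zero elements; if $\ell\ge 2$, choose a simple submodule $N\subseteq M$, so that $M/N$ is holonomic of length $\ell-1$ and therefore cyclic by the inductive hypothesis, say $M/N=A_n(\KK)\bar m$, giving $M=A_n(\KK)m+N$. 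If $A_n(\KK)m\cap N\neq 0$ then $N$, being simple, lies inside $A_n(\KK)m$, and $M=A_n(\KK)m$ is cyclic; if $A_n(\KK)m\cap N=0$ the extension $0\to N\to M\to M/N\to 0$ splits, and one must instead replace $m$ by a suitable $m+n$ with $n\in N$ lying in no complement of $N$. Producing such an $n$ is the real content of the step, and here Stafford's theorem is fed back in (equivalently, one realises $M$ as a quotient of a left ideal inside $A_n(\KK)^2$ and applies part (a)); the decisive point is that a holonomic module, having Bernstein dimension $n<2n=d(A_n(\KK))$, is torsion, so the ``generic rank'' contribution to the number of generators vanishes and only the bounded finite-length part survives.

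In both parts the single genuine obstacle is the same: promoting the easy assertion ``finitely many, roughly $n$, generators suffice'' to the sharp bounds --- two generators for an ideal, one for a holonomic module. That promotion is precisely Stafford's theorem; it is not a formal manipulation and uses the simplicity of $A_n(\KK)$ together with its dimension theory in an essential way, which is why in the body of the paper we take these statements as given from \cite{Stafford-2generators} and \cite{Coutinho-Book}.
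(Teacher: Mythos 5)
The paper offers no proof of this theorem at all: both parts are taken from the literature, citing \cite[Theorem 3.1]{Stafford-2generators} and \cite[Corollary 2.6]{Coutinho-Book}, which is exactly what you say you would do in the body of the text. Your accompanying sketch is a fair roadmap of how those results are actually established (pass from a left ideal $I$ to the torsion module $I/A_n(\KK)a$ and exploit simplicity of $A_n(\KK)$; finite length of holonomic modules via multiplicity and Bernstein's inequality, then induction on length), and you correctly identify that the genuinely hard steps---the ones you defer back to the references---are precisely where the content of Stafford's and Coutinho's arguments lies, so there is nothing to compare beyond the shared decision to cite.
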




The following example sets the scene for our brief review of relevant results from Holonomic function theory.

\begin{ex}
Let $M=C^\infty(\KK)$ be the vector space of infinitely many differentiable functions over the field $\KK$. Define the mapping $\bullet :   A_1(\KK) \times M  \to M$, where the action of $L=\sum_{t=0}^{T} p_t \partial^t$ on $f$ is defined by $L \bullet f = \sum_{t=0}^{T} p_t \partial^t f$. Recall that the coefficients $p_t$, $t=0,1,\ldots,T$, are polynomials in one variable. It is straightforward to see that ${}_{A_1(\KK)} M$ is a left $A_1(\KK)$-module.
\end{ex}

\subsection{Holonomic functions}


We begin by stating a definition of holonomic functions, making use of the terminology built up in this section. For an introduction to holonomic function theory the reader is referred to the excellent textbooks \cite{Stanley-Book, Flajolet-Book}.  

\begin{mydef}[\textbf{Holonomic function}]\label{def:Holonomic-Function}
	A function $f: {\rm{Dom}} (f) \subseteq \KK  \to \KK$ is called  holonomic (or $\partial$-finite, or $D$-finite) if it satisfies one of the following equivalent conditions:
	\begin{itemize}
		\item[(a)]   $\mathrm{Ann}_{A_1(\KK)} (f) : =  \{   L \in A_1(\KK)  \,  : \,  L \bullet f =0   \} \neq \{0\}$.
		\item[(b)]  The vector space spanned by $(\partial^t f :  t =0, 1, \ldots )$ is finite dimensional over the field of rational functions $\KK(x) = \big\{  \frac{p(x)}{q(x)} \, : \,  p, q \in \KK[x]   \big\}$ with the usual addition and multiplication operations on polynomials.
	\end{itemize}
\end{mydef}

 The class of holonomic functions enjoys rich closure properties. The following  result is borrowed from \cite[Theorem B2]{Flajolet-Book} and \cite[Theorem 1]{Non-Holonomic-Character}. These closure properties can allow for efficient proofs that given functions are holonomic when a direct application of the definition is not tractable, a point which is illustrated in our proof of Proposition \ref{holprop} that relies on the closure properties of holonomic functions. 

\begin{thm}\label{thm:Holonomic-Closure-Properties}
\noindent{(a)} The class of univariate holonomic functions is closed under the following mathematical operations: (i) sum, (ii) product, (iii) differentiation, (iv) indefinite integration, and (v) algebraic substitution.

\vspace{1mm}

\noindent{(b)} A holonomic function has only finitely many singularities.
\vspace{1mm}

\noindent{(c)} If the function $f(x_1,\ldots,x_n)$ is holonomic, then, for any $a$ and $b$, the function given by $ \int_{a}^{b} f(x_1,\ldots,x_{n-1},x_n) \,dx_n$ is also holonomic, if it is defined (\cite[Proposition 3.5]{Zeilberger90}).

\end{thm}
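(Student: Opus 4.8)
The plan is to work throughout from the two equivalent descriptions of holonomicity recorded in Definition \ref{def:Holonomic-Function}: that $\mathrm{Ann}_{A_1(\KK)}(f) \neq 0$, and (more convenient for the closure statements) that the $\KK(x)$-vector space $V_f := \mathrm{span}_{\KK(x)}\{\partial^t f : t \ge 0\}$ is finite dimensional; write $r(f) := \dim_{\KK(x)} V_f$ for this rank. For each operation in part (a) I would exhibit an explicit finite-dimensional $\KK(x)$-space containing \emph{all} derivatives of the output function, from which holonomicity is immediate, since clearing denominators in any $\KK(x)$-linear dependence among consecutive derivatives produces the required nonzero element of $A_1(\KK)$.

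The individual cases of (a) are then: (i) every derivative of $f+g$ lies in $V_f + V_g$, so $r(f+g) \le r(f)+r(g)$; (iii) $V_{\partial f} \subseteq V_f$, so $r(\partial f) \le r(f)$; (iv) if $F' = f$ then $\partial^t F \in \KK(x)\,F + V_f$ for all $t$, so $r(F) \le r(f)+1$; (ii) by the Leibniz rule $\partial^t(fg)$ is a $\KK$-linear combination of products $\partial^i f\,\partial^j g$, and reducing each factor modulo $V_f$ and $V_g$ shows every derivative of $fg$ lies in $\mathrm{span}_{\KK(x)}\{\partial^i f\cdot\partial^j g : i < r(f),\ j < r(g)\}$, so $r(fg) \le r(f)r(g)$; (v) if $g$ is algebraic over $\KK(x)$ of degree $d$, then $L := \KK(x)(g)$ is a differential field with $\dim_{\KK(x)} L = d$, so all of $g, g', g'', \dots$ lie in $L$ (implicit differentiation of the minimal polynomial, using $\mathrm{char}\,\KK = 0$), and the chain rule together with the reduction $\partial^m f = \sum_{i<m}\rho_i\,\partial^i f$, $\rho_i \in \KK(x)$, shows that every derivative of $f\circ g$ lies in the $L$-span of $\{(\partial^i f)\circ g : i < r(f)\}$, whence $r(f\circ g) \le d\,r(f) < \infty$. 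The only non-formal ingredient is the classical fact that algebraic functions are holonomic, which is exactly the $f = \mathrm{id}$ instance of the last computation.

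For part (b) I would take a nonzero annihilator $L = p_T\partial^T + \cdots + p_0 \in \mathrm{Ann}_{A_1(\KK)}(f)$ of minimal order, with leading coefficient $p_T \not\equiv 0$. By the classical existence theory for linear ODEs, any local analytic solution of $Lf=0$ extends analytically along every path avoiding the zero set of $p_T$; hence the singularities of $f$ are contained in that zero set (together with the point at infinity in the projective picture), a finite set since $p_T$ is a nonzero polynomial.

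Part (c) is the one place where the single-variable bookkeeping is not enough, and this is where I expect the real work to lie. The argument is the classical one of Bernstein and Zeilberger via $D$-module pushforward: starting from the holonomic $A_n(\KK)$-module $M = A_n(\KK)\bullet f$, one forms its integration module $\textstyle\int M$ over $A_{n-1}(\KK)$ (built from $M/\partial_{x_n} M$) and shows, by tracking how good filtrations and hence Bernstein dimension transform under this operation, that $d\!\left(\int M\right) \le d(M) - 1 = n-1$; combined with Bernstein's inequality $d(\cdot)\ge n-1$ for nonzero $A_{n-1}(\KK)$-modules, this forces $\int M$ to be holonomic, and the class of the definite integral $\int_a^b f\,dx_n$ in the appropriate realisation of $\int M$ is then annihilated by a nonzero operator in $A_{n-1}(\KK)$, giving its holonomicity whenever it is defined. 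Since this is a standard, purely algebraic result, I would simply invoke \cite[Proposition 3.5]{Zeilberger90} (or the exposition in \cite{Coutinho-Book}) rather than reproduce the filtration estimates, as the cited sources do.
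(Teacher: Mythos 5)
Your proposal is correct, but note that the paper does not actually prove this theorem: it is imported wholesale from the literature, with parts (a) and (b) attributed to \cite[Theorem B2]{Flajolet-Book} and \cite[Theorem 1]{Non-Holonomic-Character} and part (c) to \cite[Proposition 3.5]{Zeilberger90}. What you have written is essentially the standard argument sitting behind those citations: the rank bounds $r(f+g)\le r(f)+r(g)$, $r(fg)\le r(f)r(g)$, $r(\partial f)\le r(f)$, $r(F)\le r(f)+1$ and $r(f\circ g)\le d\,r(f)$ over the field $\KK(x)$, the observation that clearing denominators in a $\KK(x)$-linear relation among consecutive derivatives yields a nonzero element of $\mathrm{Ann}_{A_1(\KK)}(f)$, the localisation of singularities inside the zero set of the leading coefficient of an annihilating operator for (b), and the Bernstein--Zeilberger integration (pushforward) argument for (c). All of these steps are sound; the only caveats are cosmetic (minimality of the annihilator in (b) is not needed, since any nonzero annihilator will do, and in (a)(v) one should note that $\rho_i\circ g\in\KK(x)(g)$ so that the ambient $\KK(x)$-space really has dimension at most $d\,r(f)$, which you do implicitly). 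Since the paper treats the theorem as a black box, supplying this sketch adds self-containedness for (a) and (b), while for (c) you rightly defer, as the paper does, to \cite{Zeilberger90}.
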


From Theorem \ref{thm:Holonomic-Closure-Properties}, we know that a holonomic function possesses only finitely many singularities. The asymptotic behaviour of a holonomic functions in a neighbourhood of one of its singularities is described by the following theorem. 

\begin{thm}[see \cite{Wasow-Book} Theorem 19.1, \cite{Non-Holonomic-Character} Theorem 2]\label{thmsing}
Let $f: {\rm{Dom}} (f)\subseteq\mathbb{R}\to \mathbb{R}$ be a holonomic function satisfying the ordinary differential equation (ODE)
\begin{equation}\label{eq:Holonomic-Property}
\sum_{t=0}^{T} p_t \partial^tf(x)=0.    
\end{equation}
Let $x_0$ be a singular point of $f$. Then there exists a sufficiently small neighborhood of $x_0$ such that the following asymptotic expansion holds:
\begin{equation*}
f(x)    \sim   \exp \left(  P \left(  x-x_0\right)^{-1/r} \right) (x-x_0)^\alpha \, \sum_{j=0}^{\infty}  Q_j (\log (x-x_0)) (x-x_0)^{j\beta},    \quad  x \to  x_0,
\end{equation*}
where $P$ is a polynomial, $r \in \N_0$, $\alpha \in \mathbb{C}$, $\beta \in \mathbb{Q}_{>0}$, and the $Q_j$ are a family of polynomials of uniformly bounded degree. The choice of the quantities $P,Q_j,r,\alpha, \beta$ depends upon the particular solution of the ODE \eqref{eq:Holonomic-Property}.
	\end{thm}

Let $L=\sum_{t=0}^{T} p_t \partial ^t$ be a linear differential operator with polynomial coefficients. The \textit{algebraic} (or \textit{formal}) adjoint of $L$ is defined as
\begin{equation}\label{eq:Algebraic-Adjoint}
L^* f  :=  \sum_{t=0}^{T} (-1)^t \partial^t(p_t \, f).
\end{equation}
It is classical \cite[page 211]{Ince-Book} that the operator $L$ and its algebraic adjoint $L^*$ are linked together by the so-called \textit{Lagrange identity}:  for sufficiently regular $f,g$,
\begin{equation}\label{eq:Lagrange-Identity}
g L f  - f L^* g = \frac{\partial}{\partial x} P_L (f,g),
\end{equation}
where $P_L (f,g)$ is the \textit{bilinear concomitant}, a homogeneous bilinear form which can be  written explicitly as 
\begin{equation}\label{eq:Bilinear-Concomitant}
\begin{aligned}
P_L (f,g) &= f \Big\{    p_1 g - \partial(p_2 g)+\cdots+(-1)^{T-1} \partial^{T-1} (p_T g)       \Big \} \\
& + f'  \Big\{    p_2 g - \partial(p_3 g)+\cdots+(-1)^{T-2} \partial^{T-2} (p_T g)       \Big \} \\
& \quad \vdots\\
&  + f^{(T-1)} p_T g.
\end{aligned}
\end{equation} 
The \textit{Green's} formula is obtained by taking integration over the interval $(a,b)\subseteq\mathbb{R}$ (where $-\infty\leq a<b\leq\infty$) of both sides of \eqref{eq:Lagrange-Identity}:
\begin{equation}\label{eq:Green-Formula}
\int_{a}^{b} g(x) L f(x) \,  dx   -  \int_{a}^{b}  f(x) L^* g(x) \, dx   =  P_L (f(x),g(x))\,\Big\vert_a^b
\end{equation}
whenever the integrals exist.   

\subsection{A gentle introduction to Malliavin operators}\label{sec:Malliavin}
For the scope of  our paper, it is enough  to define Malliavin operators in dimension $d=1$ algebraically,  without discussing their functional analytic extensions.
For a  state-of-the-art  exposition of Malliavin calculus  in full generality see \cite{n-p-book,n-n-book}.

\begin{mydef} 
In the univariate case, the Malliavin derivative  $D$, the divergence $\delta$ and its pseudo-inverse  
	$\delta^{-1}$ are defined as linear mappings acting  on the polynomial ring $\R[x]$, with
	\begin{align*} &D x^n=  \partial x^n = n x^{n-1}, \quad 
		\delta x^n =(x-\partial) x^n= x^{n+1}- n x^{n-1}, \quad \text{and}& \\ &
		\delta^{-1} 1 = 0 , \quad\delta^{-1} x= 1,\quad 
		\delta^{-1} x^n= x^{n-1} + (n-1) \delta^{-1} x^{n-2} = 
		\sum_{k=0}^{\lfloor \frac{n-1}2 \rfloor}
		\frac{ ( n-1)!\!!}{ (n-1-2k)!\!! } x^{n-1-2k}, &  
	\end{align*}
	where $n!\!!$ denotes the double factorial.  Also, it holds that $	\delta^{-1} \delta x^n=x^n$, and  $\delta \delta^{-1} x^n= x^n-\E \left[ X^n \right]$, with $X \sim N(0,1)$,
		where the standard Gaussian distribution has moment sequence $\E \left[ X^{n} \right]= (n-1)!\!!$ when $n\in 2\N$ and $\E \left[ X^{n} \right]=0$ otherwise. 
		
		Let $p\ge 2$. We define the $p$th Malliavin derivative and divergence operators as follows:  $D^p f = \partial^p f = f^{(p)}$, and $\delta^p f = \delta \left( \delta^{p-1} f\right) = \delta^{p-1} (\delta f)$.
\end{mydef}

\begin{prop}[\textbf{Gaussian integration by parts}] \label{prop:GIBP-d=1} 
Let $X \sim N(0,1)$. For $f,g\in \R[x]$,
	\begin{align}\label{lem:GIBP-d=1}
	\E\bigl[  f(X) D g(X) \bigr]= 	\E\bigl[  f(X) \partial  g(X) \bigr]= \E\bigl[  g(X) \delta f( X)  \bigr]. 
	\end{align}
\end{prop}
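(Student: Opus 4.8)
The plan is to reduce the assertion to a single one-dimensional integration by parts against the Gaussian density. The first identity in \eqref{lem:GIBP-d=1} is immediate, since $Dg=\partial g$ by the very definition of the Malliavin derivative; thus the only content is the second equality $\E[f(N)\partial g(N)]=\E[g(N)\delta f(N)]$ with $\delta f = xf-f'$.

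First I would record the elementary identity $p_N'(x)=-x\,p_N(x)$, which is the algebraic reason the divergence operator takes the form $\delta=x-\partial$. Writing the expectations as integrals and using the product rule $(f p_N)'=f' p_N+f p_N'$ together with $p_N'=-x p_N$, one computes
\begin{align*}
\E\bigl[f(N)\partial g(N)\bigr] &= \int_{\R} f(x) g'(x) p_N(x)\,\ud x \\
&= \bigl[\, f(x) g(x) p_N(x)\, \bigr]_{-\infty}^{\infty} - \int_{\R} g(x)\bigl( f'(x) p_N(x) + f(x) p_N'(x)\bigr)\,\ud x \\
&= -\int_{\R} g(x)\bigl( f'(x) - x f(x)\bigr) p_N(x)\,\ud x = \E\bigl[g(N)\delta f(N)\bigr],
\end{align*}
which is exactly the claimed formula.

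The one point needing justification is the vanishing of the boundary term $\bigl[f(x)g(x)p_N(x)\bigr]_{-\infty}^{\infty}$ and the legitimacy of the integration by parts. Both follow from the super-exponential decay of $p_N$ as soon as $f$, $g$ and $f'$ grow at most polynomially at infinity, which is the standing regularity implicit in the admissible function classes $\mathcal{F}$ of Remark \ref{rem_cau}; for the purely algebraic formulation on $\mathscr{S}$ one may first establish the identity for $f,g\in C^\infty_c(\R)$, where no boundary term appears, and then pass to the general case by a routine truncation/density argument (or simply impose that all three expectations in \eqref{lem:GIBP-d=1} are finite). I expect this integrability bookkeeping to be the only genuine obstacle; the displayed computation itself is elementary.
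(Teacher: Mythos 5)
Your argument is correct: the paper states this proposition as a classical fact and offers no proof of its own, and your computation — writing the expectations as Lebesgue integrals against $p_N$, using $p_N'(x)=-x\,p_N(x)$ and one integration by parts — is exactly the standard derivation one would supply. Your closing caveat is also well placed: since the paper defines $\mathscr{S}$ as all of $C^\infty(\R)$ and treats the Malliavin operators ``algebraically,'' the identity should indeed be read under the implicit assumption that the three expectations exist (e.g.\ for $f,g$ of polynomial growth, or by first working on $C^\infty_c(\R)$), which is precisely the bookkeeping you flag.
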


Hermite polynomials play a fundamental role in Malliavin calculus. For our purposes, it will suffice to provide the following definition of the Hermite polynomials that employs the divergence operator, along with the forthcoming fact that any every polynomial can be written as a finite sum of Hermite polynomials. Many further properties can be found in the books \cite{n-p-book,n-n-book}.

\begin{mydef}[\textbf{Hermite polynomials}]\label{def:Hermite_Polynomials} 
We define the univariate Hermite polynomials as $H_0(x)=1$
and  
	\begin{equation*} 
	H_n(x) =\delta  H_{n-1}(x) = \bigl (\delta^n 1 \bigr) (x), \quad n \ge 1.
	\end{equation*}
    In particular, $H_1(x)=x$, $H_2(x)=x^2-1$, $H_3(x)=x^3-3x$ and $H_4(x)=x^4-6x^2+3$.
\end{mydef}

\begin{prop}\label{prop:Hermite_Polynomials_Properties}
		$H_0(x),\dots, H_n(x)$ are monic polynomials spanning $\R_n[x]$ (the ring of polynomials of maximum degree $n$).
        
\end{prop}

Finally, we introduce the so-called gamma operator, which is widely used in applications of the Malliavin-Stein method. We shall use this operator in our study of the algebraic structure of PSO($H_p(N)$ in Section \ref{sec3.3}.

\begin{mydef} \label{def:Gamma_Y-dimension1}
	For a given (target) polynomial $y=h(x)$,  with $h\in \R[x]$, we introduce the linear operator $\Gamma_y$ acting on $\R[x]$ as 
	\begin{align*}
		\Gamma_y( f(x) ) = D h(x)  \delta^{-1}(f(x) ) =  \partial h(x) \delta^{-1}(f(x) ).
	\end{align*}
\end{mydef}
\begin{rem}{ \rm
		Let $X \sim N(0,1)$, and denote $Y=h(X)$ where $h$ is as above. Assume that $f,g \in \R[x]$.	By using integration by parts together with the classical chain rule we obtain
		\begin{align*}&
			\E\bigl[ g( Y) f(X) \bigr] -\E [ g(Y)] \E [f(X) ] = \E\bigl[ g( h(X) ) \delta \delta^{-1}(  f(X) ) \bigr]  \\ &
			= \E\bigl[  \partial (g \circ h) (X)  \delta^{-1} f(X) \bigr] = \E\bigl[  \partial g(Y) \Gamma_Y( f(X) ) \bigr]. 
		\end{align*}
	}
\end{rem}

\section{Algebraic structure of classes of polynomial Stein operators}\label{sec:Aplgebraic_Structures}


\subsection{General results and a connection between Stein's method and holonomic function theory}\label{sec3.1}

We open this section with a general result on the algebraic structure of $\PSO(X)$, the class of polynomial Stein operators of a given continuous target random variable $X$.  Parts (a) and (b) make a connection between Stein's method and holonomic function theory, and provide simple conditions under which the class $\PSO(X)$ is non-trivial, that is $\mathrm{PSO}(X) \neq \{0\}$. In the forthcoming Remark \ref{ex:(non)Holonomic_CF}, Proposition \ref{holprop} and Corollary \ref{cor3.1}, we will apply Proposition \ref{prop:PSO_General_Targets} in some concrete settings. Parts (c) of Proposition \ref{prop:PSO_General_Targets} tells us that for a given continuous target random variable $X$, the class $\PSO(X)$ can be generated by at most two polynomial Stein operators for $X$, whilst part (d) gives an initial simple result on the question of whether $\PSO(X)$ is principal. We consider the matter of whether $\PSO(X)$ is principal to be important, and will consider this questions in the concrete settings that $X\sim N(0,1)$ in Section \ref{sec3.2} and $X= H_p(N)$ in Section \ref{sec3.3}.

\begin{prop}\label{prop:PSO_General_Targets}
	\begin{itemize}
\item[(a)] Let $\varphi_X$ denote the characteristic function of the random variable $X$. Then $\mathrm{Ann}_{A_1(\mathbb{C})} (\varphi_X) \neq \{0\}$ if and only if $\varphi_X$ is holonomic. In addition, $\mathrm{PSO}(X) \neq \{0\}$ if and only if $\mathrm{Ann}_{A_1(\mathbb{C})} (\varphi_X) \neq \{0\}$.
 \item[(b)] 
 Let $X$ be an absolutely continuous random variable with probability density function given by $p_X(x)=\frac{\mathrm{d}F_X(x)}{\mathrm{d}x}$, where $F_X$ denotes the cumulative distribution function of $X$.
 Then, $\mathrm{PSO}(X) \neq \{0\}$ if and only if $p_X$ is holonomic. 

\item[(c)] $\mathrm{PSO}(X) \lhd_r  A_1(\R)$ is a \textbf{right} ideal of the first Weyl algebra $A_1(\R)$. Moreover, $\mathrm{PSO}(X)$ can be generated by two elements, i.e. $\mathrm{PSO}(X)= \langle  G_1, G_2 \rangle_{A_1(\R)}$ for some $G_1, G_2 \in A_1(\R)$.
\item[(d)] $\mathrm{PSO}(X)$ is a right principal ideal if and  only if $\mathrm{Ann}_{A_1(\mathbb{C})} (\varphi_X) \lhd_l  A_1(\mathbb{C})$ is a \textbf{left} principal ideal in the first Weyl algebra $A_1(\mathbb{C})$.
\end{itemize}
\end{prop}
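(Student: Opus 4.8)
The plan is to pivot everything through the Fourier transform, which turns the moment condition $\E[Sf(X)]=0$ into an annihilation statement for the characteristic function $\varphi_X$, and then to read off all four claims from the algebraic dictionary between $\PSO(X)$ and $\Ann_{A_1(\CC)}(\varphi_X)$. Write $S=\sum_{t=0}^T p_t\partial^t$. The central computation is the following: for $f$ in the test class (say $f(x)=e^{\Ci t x}$, or more carefully a Schwartz function, using that $\mathcal F$ contains enough to determine distributions), integration by parts / the Fourier inversion identity shows that $\E[Sf(X)]=0$ for all $f$ is equivalent to the formal adjoint $S^\ast=\sum_t (-\partial)^t p_t$ annihilating $\varphi_X$, i.e. $S^\ast\bullet\varphi_X=0$. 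The adjoint map $S\mapsto S^\ast$ is a $\KK$-linear anti-automorphism of $A_1$ (it reverses products and fixes degrees), so it carries right ideals to left ideals and vice versa. This single observation is the engine for all of (a)--(d).

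For part (a): $\Ann_{A_1(\CC)}(\varphi_X)\neq 0$ iff $\varphi_X$ is holonomic is literally Definition \ref{def:Holonomic-Function}(a) (holonomicity of a one-variable function is, by definition, having nonzero annihilator in $A_1$). For the second assertion, the adjoint identity gives $\PSO(X)\otimes_\R\CC\cong \Ann_{A_1(\CC)}(\varphi_X)$ as sets via $S\mapsto S^\ast$; one must be a little careful that $\PSO(X)$ is defined over $\R$ while the annihilator lives over $\CC$, but since $\varphi_X$ satisfies $\overline{\varphi_X(t)}=\varphi_X(-t)$ one checks that the annihilator is "defined over $\R$" in the appropriate sense, so $\PSO(X)\neq 0 \iff \Ann_{A_1(\CC)}(\varphi_X)\neq 0$. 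For part (b): if the characteristic function is integrable, Fourier inversion gives $p_X(x)=\frac1{2\pi}\int \varphi_X(t)e^{-\Ci tx}\,dt$, and by the closure properties in Theorem \ref{thm:Holonomic-Closure-Properties}(a),(c) (product with $e^{-\Ci tx}$, then integration in $t$) holonomicity of $\varphi_X$ implies holonomicity of $p_X$; conversely, $\varphi_X$ is the Fourier transform of $p_X$ and the same closure properties run the other way. Combined with part (a), this yields $\PSO(X)\neq 0 \iff p_X$ holonomic.

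For part (c): that $\PSO(X)$ is closed under addition and under right multiplication by arbitrary $Q\in A_1(\R)$ is immediate from Definition \ref{def:PSO} together with the invariance hypothesis $Q\mathcal F\subseteq\mathcal F$ from Remark \ref{rem_cau} — if $\E[Sf(X)]=0$ for all $f$, then $\E[(SQ)f(X)]=\E[S(Qf)(X)]=0$ since $Qf\in\mathcal F$ — so $\PSO(X)\lhd_r A_1(\R)$; the two-generator statement is then Stafford's Theorem \ref{thm:Stafford}(a). For part (d): the anti-automorphism $S\mapsto S^\ast$ sends the right ideal $\PSO(X)$ onto the left ideal $\Ann_{A_1(\CC)}(\varphi_X)$ (modulo the same real-versus-complex bookkeeping as in (a)), and it sends a one-element generating set to a one-element generating set, so $\PSO(X)$ is principal iff $\Ann_{A_1(\CC)}(\varphi_X)$ is. The main obstacle I anticipate is not any deep algebra but the analytic/bookkeeping step in the adjoint identity: justifying that "$\E[Sf(X)]=0$ for all $f\in\mathcal F$" is genuinely equivalent to "$S^\ast\bullet\varphi_X=0$" requires choosing the test functions well (e.g. exploiting that $\mathcal F\supseteq\mathcal S(\R)$ or contains $e^{\Ci tx}$ when all moments exist), checking the $L^1(X)$ integrability conditions, and then carefully tracking how the real structure of $\PSO(X)\subseteq A_1(\R)$ matches the conjugation symmetry of $\varphi_X$ inside $A_1(\CC)$.
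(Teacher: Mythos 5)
Your overall architecture — pivot through the characteristic function, use an anti-automorphism of $A_1$ to match the right ideal $\PSO(X)$ with the left ideal $\Ann_{A_1(\CC)}(\varphi_X)$, then get (c) from invariance of $\mathcal F$ plus Stafford and (b) from Fourier inversion plus holonomic closure — is exactly the paper's. But your central identity is wrong: the formal adjoint $S^\ast=\sum_t(-\partial)^t p_t$ is the operator satisfying $\int (Sf)\,g=\int f\,(S^\ast g)$, so when $X$ has a smooth density it annihilates the \emph{density} $p_X$, not the characteristic function $\varphi_X$. The operator that annihilates $\varphi_X$ is a different anti-isomorphism, namely the Fourier map $\Psi$ of Lemma \ref{lem:Anti-Isomorphism}, $x^n\partial_x^k\mapsto \mathrm{i}^{k-n}t^k\partial_t^n$, which swaps the roles of multiplication and differentiation; it arises from the identity $\E\bigl[\,x^n\partial_x^k e^{\mathrm{i}tx}\big|_{x=X}\bigr]=\mathrm{i}^{k-n}t^k\partial_t^n\varphi_X(t)$, so that $\E[Se^{\mathrm{i}tX}]=\Psi(S)\varphi_X(t)$. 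Your identity happens to look right for the Gaussian because $p_N$ and $\varphi_N$ coincide up to a constant, but it fails for a general target: e.g.\ for the exponential distribution the adjoint of its Stein operator annihilates $e^{-x}$ but, after renaming $x\to t$, does not annihilate $1/(1-\mathrm{i}t)$. As written, your reduction of (a) and (d) therefore does not go through; substituting $\Psi$ for $(\cdot)^\ast$ repairs it and recovers the paper's argument (the adjoint-acting-on-the-density version could instead be used to prove (b) directly, but it needs the existence of a smooth density, which is not assumed in (a) or (d)).

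Two smaller points. First, the direction ``$\Ann_{A_1(\CC)}(\varphi_X)\neq 0\Rightarrow\PSO(X)\neq 0$'' needs more than real-versus-complex bookkeeping: from $\mathcal L\varphi_X=0$ one only gets $\E[Se^{\mathrm{i}tX}]=0$, and to upgrade this to $\E[Sf(X)]=0$ for all $f$ in the test class the paper writes $f$ via Fourier inversion, interchanges $S$ with the $t$-integral, and applies Fubini (it also first composes $\varphi_X$ with $t\mapsto -t$, using closure of holonomic functions under algebraic substitution, to land on real coefficients). You flag this as the anticipated obstacle, which is the right instinct, but it is a necessary step rather than optional polish. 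Second, parts (b) and (c) of your plan are correct and coincide with the paper's proofs.
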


We will need the following lemma in our proof of Proposition \ref{prop:PSO_General_Targets}.

\begin{lem}\label{lem:Anti-Isomorphism}
The Fourier  operator   $\Psi : A_1(\CC)  \to A_1(\CC)$, given by
\begin{align*} 
x^n \partial_x^k \mapsto 
\Psi\bigl( x^n \partial_x^k )=
\Ci^{k-n} t^k \partial_t^n,  \quad k,n\in \N_0,
\end{align*} 
extended by linearity is an anti-isomorphism of the Weyl algebra $A_1(\CC)$, that is to say, a bijection so that for every $S_1,S_2 \in A_1(\CC)$,
\begin{align*}
\Psi( S_1 S_2 ) =\Psi( S_2 ) \Psi(S_1).
\end{align*}
\end{lem}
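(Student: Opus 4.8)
The plan is to verify the claimed anti-isomorphism property directly at the level of generators and then extend by linearity and the algebra relations. First I would check that $\Psi$ is well-defined: the monomials $x^n\partial_x^k$, $k,n\in\N_0$, form a $\KK$-basis of $A_1(\CC)$ (this is the standard normal form, writing every element with all $x$'s to the left of all $\partial$'s), so specifying $\Psi$ on this basis and extending $\CC$-linearly unambiguously defines a linear map $A_1(\CC)\to A_1(\CC)$. The same normal-form argument applied to the target copy of $A_1(\CC)$, now in the variable $t$ with $\partial_t$, shows that the images $\Ci^{k-n}t^k\partial_t^n$ again run over (scalar multiples of) a basis, so $\Psi$ is a linear bijection with inverse of the same shape.

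The heart of the argument is the identity $\Psi(S_1S_2)=\Psi(S_2)\Psi(S_1)$. Since both sides are bilinear in $(S_1,S_2)$, it suffices to prove it when $S_1=x^{n_1}\partial_x^{k_1}$ and $S_2=x^{n_2}\partial_x^{k_2}$ are basis monomials. The natural route is to reduce further to the two generating cases: $\Psi$ reverses products, so it is enough to know that $\Psi$ sends $x\mapsto \Ci^{-1}\partial_t = -\Ci\,\partial_t$ and $\partial_x\mapsto \Ci\,t$, that $\Psi$ is multiplicative-with-reversal on these two elements, and that the defining relation $[x,\partial_x]=x\partial_x-\partial_x x=-1$ (equivalently $[\partial_x,x]=1$) is respected — i.e. that the images satisfy $[\Psi(\partial_x),\Psi(x)]=\Psi([\partial_x,x])=\Psi(1)=1$, which reads $[\Ci t,-\Ci\partial_t]=-\Ci^2[t,\partial_t]=-\Ci^2(-1)=1$, as required. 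Once the generators and their single commutation relation are matched up with reversal, a clean way to conclude is to invoke the universal property of the Weyl algebra: $A_1(\CC)^{\mathrm{op}}$ is again presented by two generators $X',Y'$ with $[X',Y']=-1$, and the assignment $X'\mapsto -\Ci\partial_t$, $Y'\mapsto \Ci t$ extends to an algebra homomorphism $A_1(\CC)^{\mathrm{op}}\to A_1(\CC)$ precisely because these images satisfy the defining relation; unwinding $A_1(\CC)^{\mathrm{op}}$ back to $A_1(\CC)$ turns this homomorphism into the claimed anti-homomorphism, and one checks it agrees with $\Psi$ on the monomial basis (a short bookkeeping computation, e.g. $\Psi(x^n\partial_x^k)=\Psi(\partial_x)^k\Psi(x)^n=(\Ci t)^k(-\Ci\partial_t)^n=\Ci^{k}\,(-\Ci)^{n}\,t^k\partial_t^n=\Ci^{k-n}t^k\partial_t^n$, using $(-\Ci)^n=\Ci^{-n}$). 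Bijectivity then follows from the analogous construction of $\Psi^{-1}$, or simply by noting $\Psi^2$ is (up to the explicit scalars) the identity on the monomial basis.

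The main obstacle I anticipate is purely computational rather than conceptual: one must be careful that the normal-form reordering used to define $\Psi$ on monomials is compatible with the reversal, because $x^{n_1}\partial_x^{k_1}\cdot x^{n_2}\partial_x^{k_2}$ is not itself in normal form — moving $\partial_x^{k_1}$ past $x^{n_2}$ produces lower-order correction terms via $[\partial_x,x]=1$. So the direct monomial verification requires expanding $\partial_x^{k}x^{n}=\sum_j \binom{k}{j}\binom{n}{j}j!\,x^{n-j}\partial_x^{k-j}$ (and the mirror identity in $t$) and matching coefficients. The universal-property approach sidesteps this bookkeeping entirely by only checking the generators and the single relation, so I would present that as the main line of proof and relegate the explicit monomial formula $\Psi(x^n\partial_x^k)=\Ci^{k-n}t^k\partial_t^n$ to a one-line consequence, as indicated above. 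A final sanity check worth recording is that $\Psi$ is an involution up to scalars, which both confirms bijectivity and guards against sign errors in the powers of $\Ci$.
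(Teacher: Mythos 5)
Your proposal is correct in substance but takes a genuinely different route from the paper's. The paper proves the identity $\Psi(S_1S_2)=\Psi(S_2)\Psi(S_1)$ by brute force on the monomial basis: it expands $\Psi\bigl(x^n\partial_x^k\bigr)\Psi\bigl(x^m\partial_x^\ell\bigr)$ and $\Psi\bigl(x^m\partial_x^\ell\,x^n\partial_x^k\bigr)$ using the reordering formula $\partial_t^n t^\ell=\sum_r\binom{n}{r}\frac{\ell!}{(\ell-r)!}t^{\ell-r}\partial_t^{n-r}$ and its mirror in the $x$-variable, and matches the coefficients $\frac{n!\,\ell!}{r!\,(n-r)!\,(\ell-r)!}$ term by term --- exactly the bookkeeping you chose to sidestep. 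Your universal-property argument (present $A_1(\CC)^{\mathrm{op}}$ by two generators and one relation, check that $-\Ci\partial_t$ and $\Ci t$ satisfy it, then recover the monomial formula from $\Psi(x^n\partial_x^k)=\Psi(\partial_x)^k\Psi(x)^n$) is shorter and less error-prone, at the cost of invoking the presentation of the Weyl algebra; the paper's computation is longer but entirely self-contained and doubles as a worked example of the product rule in $A_1$. One slip you should fix: under an anti-homomorphism the relation $\partial_x x-x\partial_x=1$ becomes $\Psi(x)\Psi(\partial_x)-\Psi(\partial_x)\Psi(x)=1$, so the commutator to check is $[\Psi(x),\Psi(\partial_x)]=1$, not $[\Psi(\partial_x),\Psi(x)]=\Psi([\partial_x,x])$ as you wrote (commutators pick up a sign under reversal). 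As written, your $[\Ci t,-\Ci\partial_t]$ equals $[t,\partial_t]=-1$, and your arithmetic error cancels against the mislabelled commutator. The correct check, $[-\Ci\partial_t,\Ci t]=[\partial_t,t]=1$, does go through, so the argument stands once the sign bookkeeping is straightened out.
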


\begin{proof}
Clearly, $\Psi$ is a bijective map. On the other hand, the set $\{   x^n \partial^k_x \, :\,  n,k \in \N_0\}$ provides a basis (as a vector space) over the field $\CC$; see \cite[Proposition 1.2]{Bjork-Book}. Therefore, since
\begin{align*}
\Psi\bigl( x^n \partial^k_x\bigr)
\Psi\bigl( x^m \partial^{\ell}_x\bigr)
= \Ci^{k+\ell-n-m} t^k \partial^n_t t^{\ell} 
\partial^m_t
= \Ci^{k+\ell-n-m} \sum_{r=0}^{n\wedge \ell}
\frac{n! \ell! }{r! (n-r)!(\ell-r)!}
t^{k+\ell-r} \partial_t^{m+n-r}
\end{align*}
and
\begin{align*} 
\Psi\bigl( x^m \partial^{\ell}_x x^n \partial^k_x \bigr)
&=\Psi\biggl(
\sum_{r=0}^{\ell\wedge n} \frac{ \ell!}{r! (\ell-r)!}
\frac{n!}{(n-r)!} x^{n+m-r}\partial_x^{k+\ell-r}
\biggr)\\
& = \sum_{r=0}^{\ell\wedge n} \frac{ \ell!n!}{r! (\ell-r)!(n-r)!} 
\Psi\bigl( x^{n+m-r}\partial_x^{k+\ell-r} \bigr)\\
&=  \Ci^{k+\ell-n-m} \sum_{r=0}^{n\wedge \ell}
\frac{ \ell!n! }{r! (\ell-r)!(n-r)!} 
t^{k+\ell-r} \partial_t^{m+n-r},
&\end{align*}
the result follows.
\end{proof}

\noindent{\emph{Proof of Proposition \ref{prop:PSO_General_Targets}.}}	(a) The first claim is a direct application of Definition \ref{def:Holonomic-Function}. For the second claim, first assume $0 \neq S \in \PSO(X)$.  Let $\mathcal{L} = \Psi(S)$ be the image of $S$ under the Fourier mapping $\Psi$ in Lemma \ref{lem:Anti-Isomorphism}. Then $ 0 \neq \mathcal{L}   \in \Ann_{A_1(\CC)} (\varphi_X)$ because $\mathcal{L}  \varphi_X (t) = \E[S e^{\Ci t X}] =0$ for every real $t$, where we applied the Stein operator $S$ to the real and imaginary parts of $e^{\Ci t X}$. 
For the other direction, 
assume that $\Ann_{A_1(\CC)}(\varphi_X)$ is non-trivial. In other words, the characteristic function $\varphi_X$ is holonomic. Consider the algebraic function $g(t)=-t$ for every real $t$. Then Theorem \ref{thm:Holonomic-Closure-Properties} implies that the function $\varphi_X \circ g$ is holonomic. Take $0 \neq  \mathcal{L}  \in \Ann_{A_1(\CC)} (\varphi_X \circ g)$ and let $S = \Psi^{-1}(L)$.  Then $0 \neq S \in A_1(\R)$, and $\E[S e^{-\Ci tX}]= \mathcal{L}  (\varphi_X \circ g)(t) =0$ for every real $t$. In addition, for an arbitrary $f \in \mathcal{S}(\R)$, the Schwartz class, by Fourier inversion formula, we can write 
		\begin{align*} f(x) = \frac{1}{2\pi} \int_{\R}  e^{-\Ci t x} \widehat{f}(t) \,dt. 
		\end{align*}
		Therefore, 
		\begin{align*}
		\E[S f(X)] &= \E  \left[  S \biggl( \frac{1}{2\pi} \int_{\R}  e^{-\Ci t X} \widehat{f}(t)\, dt   \biggr)\right] = \frac{1}{2\pi}\E   \left[   \int_{\R}  S (e^{-\Ci t X} )  \widehat{f}(t)\, dt \right] \\ 
		& =\frac{1}{2\pi} \int_{\R} \widehat{f}(t) \E  \big[   S (e^{-\Ci t X} ) \big]  \,dt 
		 = \frac{1}{2\pi} \int_{\R} \widehat{f}(t)  \mathcal{L}  (\varphi_X \circ g)(t) \, dt =0,
		\end{align*}
        where we used Fubini's theorem in obtaining the third equality.
	Note that in obtaining the second equality, we may interchange the integral and the operator $S$ because  $\widehat{f}$ is also in $\mathcal{S}(\R)$ (we emphasis that the operator $S$ acts on the variable $x$ and not on $t$).  Hence, $S\in \PSO(X)$.
	
\vspace{2mm}
	
\noindent{(b)}  Suppose that $X$ is a continuous random variable with holonomic density $p_X$ that is supported on the interval $(a,b)\subseteq\mathbb{R}$, where $-\infty\leq a<b\leq \infty$. Since the density $p_X$ of $X$ is holonomic, we know from part (b) in Theorem \ref{thm:Holonomic-Closure-Properties} that $p_X$ has only finitely many singularities. 
These singularities can occur at the points $a$ and $b$ or in the interior of the interval $(a,b)$, which we denote by $a<x_1<x_2<\ldots<x_{n-1}<b$, and we also denote $x_0=a$ and $x_n=b$. Since $p_X$ is holonomic, it follows that there exists $L_i\in A_1(\mathbb{R})\setminus\{0\}$ such that $L_ip_X(x)=0$ on the interval $(x_{i-1},x_i)$, $i=1,\ldots,n$. We now consider the asympototic behaviour of the density $p_X$ in small neighbourhoods of the singularities. Since $\int_a^b p_X(x)\,dx=1<\infty$, it follows from Theorem \ref{thmsing} that, for $0\leq i\leq n$,
\begin{equation*}
p_X(x)    \sim  (x-x_i)^{\alpha_i} \, \sum_{j=0}^{\infty}  Q_{i,j} (\log (x-x_i)) (x-x_i)^{j\beta_i},    \quad  x \to  x_i,
\end{equation*}
where $\alpha_i>-1$, $\beta_i \in \mathbb{Q}_{>0}$ and the $Q_{i,j}$ are a family of polynomials of uniformly bounded degree. 

We will make use of the fact that the ring of linear differential operators with polynomial coefficients is a left Euclidean domain \cite{Ore}. To this end, we let $L\not=0$ denote the least common left divisor of $L_1,\ldots L_k$. Now, clearly the operator $R=\prod_{i=0}^{n} (x-x_i)^{\mathrm{ord}(L)} L$ annihilates the entire density function $p_X$ where $\mathrm{ord}(L)$ is the order of the operator $L$ (it is known that $\mathrm{ord}(L) \le \sum_i  \mathrm{ord}(L_i)$, see \cite[Theorem 6]{BoundOnOrder}). Moreover, the operator $S=\prod_{i=0}^{n} (x-x_i)^{\alpha+1+\mathrm{ord}(L)}$, where $\alpha=\max_i\alpha_i$, also annihilates the entire density function $p_X$ over the interval $(a,b)$. We also note that since $S\in A_1(\mathbb{R})$, it follows that $S$ has an adjoint $S^*$ (as given by (\ref{eq:Algebraic-Adjoint})) which is also an element of $A_1(\mathbb{R})$.

Since $Sp_X(x)=0$ for all $x\in\mathbb{R}$, it follows that for $f\in C_c^\infty(\mathbb{R})$ we have that $\int_a^bf(x)Sp_X(x)\,dx=0$. Therefore, by Green's formula (\ref{eq:Green-Formula}),
\begin{align}
0&=\int_a^bf(x)Sp_X(x)\,dx=
\sum_{i=1}^n\int_{x_{i-1}}^{x_i}f(x)Sp_X(x)\,dx \nonumber\\
&=\sum_{i=1}^n\bigg\{\int_{x_{i-1}}^{x_i}p_X(x)S^*f_X(x)\,dx+\Big[P_S(p_X(x),f(x))\,\Big]_{x_{i-1}}^{x_i} \bigg\}\nonumber\\
&=\sum_{i=1}^n\int_{x_{i-1}}^{x_i}p_X(x)S^*f_X(x)\,dx=\int_{a}^{b}p_X(x)S^*f_X(x)\,dx=\mathbb{E}[S^*f(X)], \label{gf}
\end{align}
where in the third step we used that $P_S(p_X(x_i),f(x_i))=0$ for all $0\leq i\leq n$ (where the bilinear concomitant $P_S$ is defined as in (\ref{eq:Bilinear-Concomitant})), which is a consequence of our construction of the operator $S$. Thus, $S^*\in \mathrm{PSO}(X)$, and we conclude that $\mathrm{PSO}(X)\not=\{0\}$.

Suppose now that $\PSO(X) \neq \{0\}$. By part (a), the assumption $\PSO(X) \neq \{0\}$ implies that the characteristic function $\varphi_X$ is holonomic. Now, by the Gil-Pelaez inversion formula \cite{g51} we have that, for $x\in\mathbb{R}$,
\begin{align}\label{levy}
F_X(x)=\frac{1}{2}-\frac{1}{\pi}\int_{0}^\infty \frac{\mathrm{Im}[e^{-\mathrm{i}tx}\varphi_X(t)]}{t}\,dt,   
\end{align}
with the formula applying since $x$ is a continuity point of $F_X$ as $X$ is an absolutely continuous random variable. The integrand $\mathrm{Im}[e^{-\mathrm{i}tx}\varphi_X(t)]/t$ is holonomic because $\varphi_X$ is holonomic and the product of two holonomic functions is holonomic (by part (a) of Theorem \ref{thm:Holonomic-Closure-Properties}), and the imaginary part of a holonomic function is also holonomic. Holonomicity is also preserved under integration (again, see part (a) of Theorem \ref{thm:Holonomic-Closure-Properties}) and therefore by (\ref{levy}) it follows that the cumulative distribution function $F_X$ is holonomic. 
By assumption, $F_X$ is differentiable with derivative $p_X$. Since holonomicity is preserved under differentiation (part (a) of Theorem \ref{thm:Holonomic-Closure-Properties}), it follows that the density $p_X$ is holonomic.
    
	
\vspace{2mm}
	
\noindent{(c)} Clearly $(\PSO(X), +)$ is a subgroup of $(A_1(\R),+)$. Now the claim follows at once due to the fact that the class $\mathcal{F}$ is invariant under elements of the first Weyl algebra $A_1(\R)$. Finally, that $\PSO(X)$ can be generated by two elements is Stafford's Theorem \ref{thm:Stafford}. 
	   
\vspace{2mm}
	   
\noindent{(d)} It is easily seen that $\Ann_{A_1(\mathbb{C})} (\varphi_X) \lhd_l A_1 (\CC)$ is a left ideal of the first Weyl algebra $A_1(\CC)$. Moreover, the anti-isomorphism $\Psi$ given in Lemma \ref{lem:Anti-Isomorphism} sends back and forth a generator to a generator. \hfill $\Box$

\begin{rem}\label{ex:(non)Holonomic_CF}{ \rm
(a) Many continuous probability distributions admit holonomic characteristic functions, including the Gaussian, exponential, gamma, beta, 
variance-gamma,
Pearson distributions,  and semicircular just to mention a few. 
Therefore, according to Proposition \ref{prop:PSO_General_Targets} part (a), the associated polynomial Stein operator classes are non-trivial; indeed, polynomial Stein operators are known for all these distributions as well as many others; we refer the reader to \cite{a-g-g-algebraic-stein-operators,gms19,ley} and references therein for an overview of this literature.



\vspace{2mm}

\noindent{(b)} Let $X$ follow the extreme value Gumbel distribution with density function $p_X(x)= e^{-x} e^{-e^{-x}}$, $x \in \R$. All moments of the Gumbel distribution exist. It is known that the characteristic function is given by $\varphi_X(t) =\Gamma(1 - \Ci t)$, where the gamma function is defined by $\Gamma(z)=\int_{0}^{\infty} x^{z-1} e^{-x}\,dx$, for $\text{Re}(z) >0$. The gamma function $\Gamma(z)$ is not a holonomic function, see, e.g.,  \cite[p.\ 4]{Holonomic-Toolkit}, and therefore the characteristic function $\varphi_X$ is not holonomic. Therefore, $\PSO(X) =0$ is the trivial ideal. For Gumbel approximation via Stein's method the reader can consult \cite{ley, Gumber_Stein}.

\vspace{2mm}

\noindent{(c)} Let $X$ follow the log-normal distribution with density function given by $p_X(x)= (2\pi)^{-1/2}x^{-1}\exp(-(\log(x))^2/2)$, $x>0$. All moments of the log-normal distribution exist. It is readily seen that $p_X(x)$ is not a holonomic function \cite{Non-Holonomic-Character},
and therefore $\mathrm{PSO}(X)=\{0\}$ is the trivial ideal. A brief account of the Malliavin-Stein method for log-normal approximation is given by \cite{kusuotud}. We also remark that it is a well-known open problem to find a closed-form expression for the characteristic function of the log-normal distribution. Proposition \ref{prop:PSO_General_Targets} tells us that the characteristic function of the log-normal distribution is not holonomic and thus does not satisfy a homogeneous, linear ordinary differential equation with rational coefficients. This partially explains the difficulty in finding a closed-form formula for the characteristic function, because many of the standard special functions are solutions of homogeneous, linear ODEs with rational coefficients.
}
\end{rem}

From Proposition \ref{prop:PSO_General_Targets} and the closure properties of holonomic functions, we deduce the following proposition which shows that the class of polynomial Stein operators for a wide family of distributions is non-trivial. 

\begin{prop}\label{holprop}Suppose $h\in\mathbb{R}[x]$ (the ring of polynomials with real-valued coefficients) and let $X$ be a real-valued random variable with holonomic density. Let $Y=h(X)$. Then $\mathrm{PSO}(Y)\not=\{0\}$.
\end{prop}

As the Gaussian density is holonomic, the following corollary is immediate. This answers one of the open problems raised by our recent work \cite{a-g-g-algebraic-stein-operators}.

\begin{cor}\label{cor3.1} Let $N\sim N(0,1)$.  Then, for any $h\in\mathbb{R}[x]$, we have  $\mathrm{PSO}(h(N))\not=\{0\}$.
\end{cor}

\noindent{\emph{Proof of Proposition \ref{holprop}.}} Suppose without loss of generality that $h\in\mathbb{R}[x]$ has  $k\geq0$ distinct local maxima and minima, which we label by $y_1<y_2<\cdots<y_k$. Suppose that the support of $Y=h(X)$
 is given by the interval $(a,b)\in\mathbb{R}$, and for ease of notation define $y_0:=a$ and $y_{k+1}:=b$. Suppose $y\in(y_i,y_{i+1})$ for $0\leq i\leq k$.  Then, the density of the random variable $Y=h(X)$ can be expressed as
 \begin{align*}f_Y(y)=\sum_{h(x)=y}\frac{f_X(x)}{|h'(x)|}.
 \end{align*}
From the closure properties of holonomic functions (see Theorem \ref{thm:Holonomic-Closure-Properties}), we may now argue that the density of $Y$ is holonomic. We have that $y\mapsto x_y$, a solution of $h(x_y)=y$, is holonomic, as the equation is algebraic. Also, $g=1/h'$ is holonomic, as it satisfies the ODE $(h'(x))^2g'(x)+h''(x)=0$. Thus,  by composition, $y\mapsto1/h'(x_y)$ is holonomic. By the same reasoning, $y\mapsto f_X(x_y)$ is holonomic (by assumption $f_X$ is holonomic), and by product $f_X(x_y)/|h'(x_y)|$ is holonomic. Thus, we have shown that the density $f_Y$ is holonomic on each of the intervals $(y_i,y_{i+1})$, $0\leq i\leq k$. 

To conclude that $f_Y$ is holonomic over its whole support, we make use of a classical fact that the ring of linear differential operators with polynomial coefficients is a left Euclidean
domain \cite{Ore}. By the discussion above, we can write $f_Y(y) = \sum_{i=0}^{k} f_i (y) \textbf{1}_{\{ (y_i,y_{i+1})\} }(y)$, where each $f_i$ is holonomic. Let $0\le i \le k$ and assume that $ 0 \neq L_i \in A_1 (\R)$ annihilates the density function $f_Y$ over the interval $(y_i,y_{i+1})$. Denote by $L$ the least common left divisor of $L_1,\ldots,L_k$. Note that $L \neq 0$ and clearly the operator $S=\prod_{i=0}^{k} (y-y_i)^{\mathrm{ord}(L)} L$ annihilates the entire density function $f_Y$ where here $\mathrm{ord}(L)$ stands for the order of the operator $L$ (it is known that $\mathrm{ord}(L) \le \sum_i  \mathrm{ord}(L_i)$, see \cite[Theorem 6]{BoundOnOrder}). The result now follows by part (b) of Proposition \ref{prop:PSO_General_Targets}.
\hfill $\Box$

\subsection{Algebraic structure of PSO($N$) }\label{sec3.2}

Our description of the algebraic structure of $\PSO(N)$ is given in the following theorem.

\begin{thm}\label{prop:PSO=Ideal}
Consider $\mathrm{PSO}(N)$, the class of all polynomial Stein operators associated to the standard Gaussian random variable $N$.  Then the following statements are in order:
\begin{itemize}
\item[(a)] $\mathrm{PSO}(N)$ is a vector space over $\R$ with basis
\begin{align}\label{eq:PSO-basis}
\mathbf{\mathfrak{B}}:=  \Big\{    S(k,t):= H_k\partial^t- H_{k+t},\quad k,t\in \N_0 \Big\}.
\end{align}
\item[(b)] $\mathrm{PSO}(N)$ is a principal maximal right ideal, and
\begin{equation}\label{eq:Generator}
\mathrm{PSO}(N) = \langle G \rangle _{A_1(\R)}= G A_1(\R) : = \big\{    G L \, :  \, L \in A_1(\R)       \big \},
\end{equation}
 where the generator $G=\partial-x$. (Equivalently, the ideal $\mathrm{PSO}(N)$, as a module on the first Weyl algebra $A_1(\R)$, is a cyclic module.) 
\item[(c)] The ideal $\mathrm{PSO}(N)$ (as a $A_1(\R)$ module) is \textbf{not} holonomic, although the quotient $A_1(\R) / \mathrm{PSO}(N)$ is a holonomic module.  In addition, $d (  A_1(\R) / \mathrm{PSO}(N))=1$ and $d(\mathrm{PSO}(N))=2$.
\end{itemize}
	\end{thm}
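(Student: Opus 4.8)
The plan is to prove the three parts in the order (b), (a), (c), since (b) provides the algebraic backbone from which (a) follows quickly, and (c) is then a routine dimension computation.

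\textbf{Part (b): identifying $\mathrm{PSO}(N)$ as $G A_1(\R)$.} First I would observe that $G = \partial - x \in \mathrm{PSO}(N)$ by the classical Stein lemma (Gaussian integration by parts, Proposition~\ref{prop:GIBP-d=1}). Since $\mathrm{PSO}(N)$ is a right ideal of $A_1(\R)$ by Proposition~\ref{prop:PSO_General_Targets}(c), the inclusion $G A_1(\R) \subseteq \mathrm{PSO}(N)$ is immediate. For the reverse inclusion I would use the characteristic-function/Fourier picture: by Proposition~\ref{prop:PSO_General_Targets}, $S \in \mathrm{PSO}(N)$ corresponds (via the anti-isomorphism $\Psi$ of Lemma~\ref{lem:Anti-Isomorphism}, up to the substitution $t \mapsto -t$) to an element of $\mathrm{Ann}_{A_1(\CC)}(\varphi_N)$, where $\varphi_N(t) = e^{-t^2/2}$. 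The key point is that $\varphi_N$ satisfies $\varphi_N' + t\varphi_N = 0$, i.e. $(\partial_t + t)\varphi_N = 0$, and one shows $\mathrm{Ann}_{A_1(\CC)}(\varphi_N) = A_1(\CC)(\partial_t + t)$ is the principal left ideal generated by $\partial_t + t$ (this uses that $A_1(\CC)/A_1(\CC)(\partial_t+t)$ is the holonomic module of rank one consisting of scalar multiples of $e^{-t^2/2}$, so any annihilator must be a left multiple of the first-order operator). Pulling this back through $\Psi$ (noting $\Psi$ sends left ideals to right ideals and carries a generator to a generator, and that $\Psi(\partial - x)$ is, up to the sign substitution, $\partial_t + t$) gives $\mathrm{PSO}(N) = G A_1(\R)$. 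Maximality: $A_1(\R)/GA_1(\R)$ is the cyclic module of (real) scalar multiples of $p_N$, which is simple as an $A_1(\R)$-module (it is one-dimensional over $\R$ once one quotients appropriately — more precisely it is the rank-one holonomic module with no proper nonzero submodules because a submodule would correspond to a proper quotient of $A_1(\R)$ by a strictly larger right ideal, forcing the trivial module); hence $GA_1(\R)$ is a maximal right ideal. The cyclicity statement is a restatement of $\mathrm{PSO}(N) = G A_1(\R) = \langle G\rangle$.

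\textbf{Part (a): the basis $\mathbf{\mathfrak{B}}$.} Given (b), every element of $\mathrm{PSO}(N)$ is $GL$ for some $L \in A_1(\R)$. Since $\{x^k \partial^t : k,t \in \N_0\}$ is an $\R$-basis of $A_1(\R)$, and $\{H_k \partial^t : k,t \in \N_0\}$ is also a basis (the $H_k$ span $\R_k[x]$ for each degree by Proposition~\ref{prop:Hermite_Polynomials_Properties}(a), so this is just a triangular change of basis), we have $\mathrm{PSO}(N) = G A_1(\R) = \mathrm{span}_\R\{ G(H_k \partial^t) : k,t \in \N_0\}$. By Lemma~\ref{lem:delta-operate-on-basis}, $G(H_k \partial^t) = H_k \partial^{t+1} - H_{k+1}\partial^t$. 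I would then check that $\mathbf{\mathfrak{B}} = \{S(k,t) = H_k\partial^t - H_{k+t} : k,t\in\N_0\}$ spans the same space: expand $H_{k+t}$ telescopically using $G(H_k \partial^t) = S(k,t+1) - S(k+1,t)$ after re-indexing, or directly verify $S(k,t) = \sum_{j} G(H_{k+j}\partial^{t-1-j})$-type identities; conversely $G(H_k\partial^t)$ is manifestly an $\R$-combination of two elements of $\mathbf{\mathfrak{B}}$. For linear independence of $\mathbf{\mathfrak{B}}$: in a finite linear combination $\sum c_{k,t} S(k,t) = \sum c_{k,t} H_k \partial^t - \sum c_{k,t} H_{k+t}$, look at the coefficient of the highest-order term $\partial^{t_{\max}}$; the leading-order part forces the corresponding $c_{k,t}$ to vanish after comparing Hermite components, and one induces downward on the differential order. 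Also note each $S(k,t) \in \mathrm{PSO}(N)$ independently by direct integration by parts (this is the $b86$ computation), which is a useful sanity check. One must take a little care that $S(k,0) = H_k - H_k = 0$, so $\mathbf{\mathfrak{B}}$ genuinely contributes only for $t \geq 1$ — I would either restrict to $t \geq 1$ in the spanning set or note the $t=0$ elements are the zero operator and harmless; the honest basis is $\{S(k,t) : k \in \N_0, t \geq 1\}$, and I would state it that way (matching $S(k,t)$ for $t\ge 1$, $k \ge 0$, which includes $S(0,t) = \partial^t - H_t = L_t$ and $S_m = S(m-1,1)$).

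\textbf{Part (c): Bernstein dimensions.} Here I would use the exact sequence $0 \to \mathrm{PSO}(N) \to A_1(\R) \to A_1(\R)/\mathrm{PSO}(N) \to 0$. The quotient $A_1(\R)/GA_1(\R) \cong \R\cdot p_N$ (scalar multiples of the Gaussian density, with the natural $A_1$-action) is a nonzero holonomic module: it is finitely generated (cyclic), and a good filtration is $\Gamma_\nu = \mathrm{span}_\R\{x^j p_N : 0 \le j \le \nu\}$, giving $\dim_\R \Gamma_\nu = \nu + 1$, a polynomial of degree $d = 1 = n$, so $d(A_1(\R)/\mathrm{PSO}(N)) = 1$, confirming holonomicity. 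For $\mathrm{PSO}(N)$ itself: being a nonzero right ideal of $A_1(\R)$ it is a submodule of $A_1(\R)$, and $d(A_1(\R)) = 2$ with Hilbert polynomial $\dim \mathscr{F}_\nu = \binom{\nu+2}{2}$ of degree $2$; additivity of Bernstein dimension on short exact sequences (or just: a submodule of $A_1$ with finite-codimension quotient has the same leading behaviour) gives $d(\mathrm{PSO}(N)) = \max(2, 1) = 2 \neq n = 1$, so $\mathrm{PSO}(N)$ is not holonomic. I would cite the standard additivity $d(M) = \max(d(M'), d(M''))$ for $0 \to M' \to M \to M'' \to 0$ from Coutinho or Björk rather than reprove it.

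\textbf{Main obstacle.} The crux is the reverse inclusion in (b), i.e. showing $\mathrm{Ann}_{A_1(\CC)}(\varphi_N) = A_1(\CC)(\partial_t + t)$ — equivalently that there is no annihilator of $e^{-t^2/2}$ of order zero and that a minimal-order annihilator of order one generates the whole annihilator. This is standard holonomic-function theory (the annihilator ideal of a nonzero solution of a first-order ODE with polynomial coefficients, with no polynomial solutions of the associated lower-order equations, is principal of rank one), but it must be argued cleanly; the division algorithm in $A_1(\CC)$ with respect to the order filtration, reducing any annihilator modulo $\partial_t + t$ to something of order zero which must then be a rational (indeed polynomial after clearing denominators and using simplicity) annihilator of $e^{-t^2/2}$, hence zero, is the cleanest route. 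Everything else — the change-of-basis arguments in (a), the filtration computations in (c) — is routine.
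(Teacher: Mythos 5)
Your proposal is correct in outline but proceeds along a genuinely different route from the paper's. The paper proves (a) first and directly: Gaussian integration by parts shows $S=\sum_t p_t\partial^t\in\PSO(N)$ iff $p_0=-\sum_{t\ge1}\delta^t p_t$, and expanding the $p_t$ in Hermite polynomials immediately yields the basis $S(k,t)=H_k\partial^t-\delta^tH_k$. It then gets (b) constructively: each basis element is exhibited as a telescoping sum, $S(k,t)=G\bigl(\sum_{r=1}^{t}H_{k+t+1-r}\partial^{r-1}\bigr)$ via Lemma \ref{lem:delta-operate-on-basis}, so principality never passes through the Fourier side; maximality is quoted from the literature. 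You invert this order, establishing (b) first by proving $\mathrm{Ann}_{A_1(\CC)}(\varphi_N)=A_1(\CC)(\partial+t)$ through division in the ring of differential operators and pulling back through $\Psi$, then deducing (a). Your route is more conceptual and explains \emph{why} principality holds (the annihilator of a first-order holonomic function is principal), while the paper's is more elementary and produces the explicit cofactor. For (c) you use additivity of Bernstein dimension along $0\to\PSO(N)\to A_1(\R)\to A_1(\R)/\PSO(N)\to 0$, whereas the paper argues that a holonomic finitely generated $A_1$-module is torsion, so holonomicity of $\PSO(N)$ would make $G$ a zero divisor; both are valid. Your observation that $S(k,0)=0$, so the honest basis is $\{S(k,t):k\in\N_0,\,t\ge1\}$, is a legitimate correction to the statement as printed.

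Two steps need shoring up. First, the division argument takes place in $\CC(t)\langle\partial\rangle$, so a priori it gives $L=Q(\partial+t)$ with $Q$ having \emph{rational} coefficients; to conclude $L\in A_1(\CC)(\partial+t)$ you must check $Q\in A_1(\CC)$. This is true — writing $Q=\sum_j q_j\partial^j$ and comparing coefficients of $\partial^{m+1},\partial^m,\dots$ in $Q(\partial+t)=L$ shows by downward induction that each $q_j$ is a polynomial — but it is not automatic and your writeup skips it. Second, your justification of maximality ("a submodule would correspond to a strictly larger right ideal, forcing the trivial module") is circular: it restates that maximality is equivalent to simplicity of the quotient. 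A genuine argument is that for any nonzero $p(x)e^{-x^2/2}$ in the quotient module $\R[x]e^{-x^2/2}$, repeated application of $\partial+x$ reduces $\deg p$ to zero, after which multiplication by polynomials recovers the whole module; alternatively simply cite the references the paper uses.
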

\begin{rem}{ \rm
\begin{itemize}
\item[(a)] The first Weyl algebra $A_1(\R)$ is not a principal ideal domain. In fact, the ideal 
generated by elements $x^2$ and $1+x\partial$ is not principal, see \cite[Example 7.11.8]{mr-Book87}. 
\item[(b)]Items (b)-(c) of Theorem \ref{prop:PSO=Ideal} reveal that $\PSO(N)$ is a cyclic non-holonomic module on the first Weyl algebra $A_1(\R)$. It is a well-known fact in $D$-module theory that every holonomic module is cyclic; see  Theorem \ref{thm:Stafford} item (b).
\end{itemize} 
}
\end{rem}

\begin{rem}\label{rem:PSO(N)-Generators}
{\rm  Consider the right ideal $I=\langle  G  \rangle \lhd_r A_1(\R)$, where $G=  \partial-x$. Clearly, $I \subseteq \PSO(N)$. The quotient ideal $A_1(\R)/I$ is a holonomic module (see e.g. \cite[p.\ 86]{Coutinho-Book}) and by part (b) of Stafford's Theorem  \ref{thm:Stafford} is cyclic meaning that there is $G' \in A_1(\R)$ so that $A_1(\R)/I  =  \langle  G' + I \rangle$. Therefore, $\PSO(N) = \langle G, G' \rangle_{A_1(\R)}$. However, Theorem \ref{prop:PSO=Ideal} part (b) yields that the extra generator $G'$ is immaterial. 	
}
\end{rem}

In proving part (b) of Theorem \ref{prop:PSO=Ideal} we will make use of the following simple lemma.

\begin{lem}\label{lem:delta-operate-on-basis}
Let $G= \partial-x \in A_1(\R)$. Then, for every $k,l\in \N_0$, it holds that

\begin{equation*}
  G ( H_k \partial^l ) =  H_k \partial^{l+1}-H_{k+1} \partial^l.
  \end{equation*}
\end{lem}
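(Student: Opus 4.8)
The plan is to compute $G(H_k \partial^l)$ directly as a product in the first Weyl algebra $A_1(\R)$, using the Leibniz-type rule for how $\partial$ interacts with multiplication by a polynomial. Recall $G = \partial - x$, so $G(H_k \partial^l) = \partial H_k \partial^l - x H_k \partial^l$. The only nontrivial piece is $\partial H_k \partial^l$, which I would rewrite using the commutation relation $[\partial, p] = \partial p - p \partial = p'$ for any polynomial $p$ (this is exactly the defining Weyl-algebra relation $[\partial, x] = 1$ extended by linearity and the product rule). Thus $\partial H_k = H_k \partial + H_k'$, giving $\partial H_k \partial^l = H_k \partial^{l+1} + H_k' \partial^l$.

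Assembling the two pieces yields
\begin{equation*}
G(H_k \partial^l) = H_k \partial^{l+1} + H_k' \partial^l - x H_k \partial^l = H_k \partial^{l+1} + (H_k' - x H_k)\partial^l.
\end{equation*}
The final step is to identify $H_k' - x H_k$ with $-H_{k+1}$. I would invoke the properties of Hermite polynomials from Proposition \ref{prop:Hermite_Polynomials_Properties}: by part (b), $H_k' = k H_{k-1}$, and by Definition \ref{def:Hermite_Polynomials} together with the divergence operator $\delta f = xf - f'$, we have $H_{k+1} = \delta H_k = x H_k - H_k'$. Hence $H_k' - x H_k = -(x H_k - H_k') = -H_{k+1}$, which is precisely the recurrence defining $H_{k+1}$. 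Substituting gives $G(H_k \partial^l) = H_k \partial^{l+1} - H_{k+1}\partial^l$, as claimed. For the edge case $k = 0$ one checks $H_1 = \delta H_0 = x$ directly, consistent with the stated Hermite conventions, so the identity holds for all $k, l \in \N_0$.

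There is no real obstacle here; the lemma is a short computation in $A_1(\R)$. The only point requiring minor care is bookkeeping the order of operators (multiplication operators versus $\partial$) correctly, since $A_1(\R)$ is noncommutative — in particular writing $\partial H_k$ as $H_k \partial + H_k'$ rather than the reverse. Everything else is a direct appeal to the Hermite recurrence $H_{k+1} = \delta H_k = xH_k - H_k'$ established earlier in the excerpt.
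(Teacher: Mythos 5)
Your proof is correct and follows essentially the same route as the paper: expand $\partial(H_k\partial^l)$ via the commutation relation (equivalently, the Leibniz rule), then recognise $xH_k - H_k' = \delta H_k = H_{k+1}$ from the paper's definition of the Hermite polynomials. The only cosmetic difference is that you cite $H_k' = kH_{k-1}$, which is not actually needed, and you add an explicit check of the $k=0$ case; neither affects the argument.
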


\begin{proof}
By definition of the operator $G$, we can write 
\begin{align*}
G  ( H_k \partial^l )  & =  \partial ( H_k \partial^l )-x H_k \partial^l=  \partial (H_k) \partial^l + H_k \partial^{l+1}-x H_k \partial^l\\
& =   H_k \partial^{l+1}-\left(  xH_k - \partial H_k \right) \partial^l
=H_k \partial^{l+1}- \delta(H_k) \partial^l \\
&=   H_k \partial^{l+1}-H_{k+1} \partial^l,
\end{align*}
where we used Definition \ref{def:Hermite_Polynomials} of Hermite polynomials.
\end{proof}

\begin{proof}[Proof of Theorem \ref{prop:PSO=Ideal}]
	(a) It is clear that $\PSO(N)$ is a linear space. Now  let $S=\sum_{t=0}^{T} p_t \partial^t \in A_1(\R)$. Observe that by using the Gaussian integration by parts formula \eqref{lem:GIBP-d=1}, $S \in \PSO(N)$ if and only if 
	\begin{equation*}
	p_0=-\sum_{t=1}^{T} \delta^t p_t.
	\end{equation*}
On the other hand, every polynomial can be written as a finite sum of Hermite polynomials; see Proposition \ref{prop:Hermite_Polynomials_Properties}. Hence, a basis of the linear space of Stein operators for the standard Gaussian distribution is given by the elements 
\[  S(k,t)= H_k \partial^t - \delta^t H_k = H_k \partial^t - H_{k+t},  \quad t, k \in \N_0.  \] 
(b)  In order to show that $\PSO(N)$ is a principal ideal, it is enough to show that for every element $S(k,t) \in \mathfrak{B}$ we have $S(k,t) = G L$ for some element $L \in A_1(\R)$. 	By taking  telescoping sums and applying Lemma \ref{lem:delta-operate-on-basis} in the second step,
\begin{align*} 
S(k, t) &=- \sum_{r=1}^t 
\biggl(H_{k+t+1-r} \partial^{r-1} -H_{k+t-r} \partial^r
\biggr)=G  \biggl(  \sum_{r=1}^ t H_{k+t+1-r} \partial^{r-1} \biggr). 
\end{align*}
Maximality of $\PSO(N)$ follows from \cite[Exercise 18, p.\ 50]{Lam-Book}. Maximality of principal ideals $xA_1(\R)$ and $A_1(\R)x$ is also discussed in \cite[p.\ 21]{mr-Book87}. 

\vspace{1mm}

\noindent{(c)} According to \cite{Coutinho-Book}, Corollary 1.2 and Proposition 1.3, Chapter 10, torsion and holonomic finitely generated $A_1(\R)$ modules are the same thing. Now, if $\PSO(N)$ were a holonomic module then this would imply that the generator $G=\partial-x$ would be a torsion element, i.e., $G L =0$ for some non-zero element $L \in A_1(\R)$, which is a contradiction with $A_1(\R)$ being an integral domain. Lastly, the claim $d(\PSO(N))=2$ follows directly from the Bernstein's inequality.
	\end{proof}

\subsection{Algebraic structure of PSO($H_p(N)$)}\label{sec3.3}

We begin this section by proving that the ideal $\PSO(H_2(N))$ is principle, where $N\sim N(0,1)$. In Proposition \ref{prop_h2_gen} below we shall then prove that the operator $G=2(y+1)\partial-y\in\PSO(H_2(N))$ is the generator of the ideal.

\begin{ex}\label{ex:H_2(X)IsPrincipal}
{  \rm Let $Y=H_2(N) = N^2-1$, where $N \sim N(0,1)$. In this example, we show that the ideal $\mathrm{PSO}(Y)$ is principal. Recall that the operator $G = 2 (y+1)\partial - y \in \mathrm{PSO}(Y)$. Let $L = \sum_{t=0}^{T} p_y(y) \partial^t \in \mathrm{PSO}(Y)$ be arbitrary, where $T\ge2$. According to \cite[p.\ 19]{Coutinho-Book}, we need to show that there exists a polynomial $p_0(y) \in \R[y]$ so that $p_T(y)\partial + p_0(y) \in \mathrm{PSO}(Y)$. In order to encode the existence of the polynomial $p_0$, note that a simple application of the Gaussian integration by parts formula yields that 
\begin{equation*}
\E \left[  p_T (Y) \partial f(Y)   \right] = \E \left[           \delta \left( \frac{p_T(Y)}{2X} \right) f(Y)    \right].
\end{equation*}	
Denote $p_0 (y) : = \delta (\frac{p_T (y)}{2x})$, where here we have the association $y = H_2 (x) = x^2 -1$. Hence, we are left to show that $\delta (\frac{p_T(y)}{x}) \in \R [y]$.  Now, from Proposition 4.1 \cite{a-g-g-algebraic-stein-operators}, one can deduce that $p_T(y) = (y+1)h(y) = x t (x)$ for some polynomials $h(y) \in \R[y]$, and $t(x) \in \R[x]$ (observe that $p_T(y)/x \in \R[x]$). Next, we have 
\begin{equation*}
	\delta\bigg(\frac{p_T(y)}{x}\bigg) = \left(  x - \partial \right)\left(  \frac{p_T(y)}{x}  \right) = p_T(y) - \partial_x \left(   \frac{(y+1)h(y)}{x}\right).
\end{equation*}	  
Moreover,
\begin{align*}
\partial_x \left(  \frac{(y+1)h(y)}{x} \right) &= \frac{(2x h(y) + 2x (y+1)h'(y))2x - 2 (y+1) h(y)}{4x^2}\\
&=\frac{1}{2} h(y)+(y+1)h'(y) \in \R[y],
\end{align*}	
and hence we are done. 
}

\begin{lem}\label{lem:TowardsGenerator}
Let $Y=H_2(N)$, where $N\sim N(0,1)$. Consider $L= p_0 (y) + p_1(y) \partial$ with $p_0, p_1 \in \R[y]$, where we have identified $y=H_2(x)=x^2-1$. Then $$ L \in \mathrm{PSO}(Y) \quad \text{ if and only if } \quad L \in\langle G \rangle_{A_1(\R)},$$ where $G=-y + 2(y+1)\partial$. 
\end{lem}

\begin{proof}
It is well-known that $G \in \mathrm{PSO}(Y)$ (see, for example, \cite{np09}), and so it is obvious that when $ L \in \langle G \rangle_{A_1(\R)}$ then $L \in \mathrm{PSO}(Y)$. For the other side, assume that  $L= p_0 (y) + p_1(y) \partial \in \mathrm{PSO}(Y)$. Now, from Proposition 4.1 \cite{a-g-g-algebraic-stein-operators}, one deduces that $p_1(y) = 2(y+1)h(y)$ for some polynomials $h(y) \in \R[y]$. Since $L$ is algebraic,  
\begin{align*}
p_0 (y) = - \delta \left(  \frac{(y+1)h(y)}{x} \right)
= -y h(y) + 2 (y+1)h'(y).
\end{align*}
Hence, 
\begin{align*}
L &= p_0 (y) + p_1(y) \partial = \left( -y h(y) + 2 (y+1)h'(y) \right) + 2(y+1)h(y) \partial \\
& = G h(y) \in \langle G \rangle_{A_1(\R)}.
\end{align*}
This completes the proof of the lemma.
\end{proof}

\begin{prop}\label{prop_h2_gen}
Let $Y=H_2(N)$, where $N\sim N(0,1)$. Then, $\mathrm{PSO}(Y) = \langle G \rangle_{A_1(\R)}$,  where the generator $G=-y + 2(y+1)\partial$.
\end{prop}

\begin{proof}
Let $L = \sum_{t=0}^{T} p_t (y)\partial^t \in \mathrm{PSO}(Y)$. Since we know that $\mathrm{PSO}(Y)$ is principal, it follows that there exists a polynomial $\widetilde{p}_{0,T} (y) \in \R[y]$ so that the operator $\widetilde{L}_T := \widetilde{p}_{0,T} (y) + p_T (y) \partial \in \mathrm{PSO}(Y)$.  Now, by Lemma \ref{lem:TowardsGenerator}, we have $\widetilde{L}_T \in \langle G \rangle_{A_1(\R)}$. Note that
\begin{align*}
L - \widetilde{L}_T \partial^{T-1} = \left( p_{T-1} (y) - \widetilde{p}_{0,T} (y) \right) \partial^{T-1} + \sum_{t=0}^{T-2} p_t (y) \partial^t = :S_{T-1}.
\end{align*}
This implies that $L \in \langle G \rangle_{A_1(\R)}$ if and only if $S_{T-1} \in \langle G \rangle_{A_1(\R)}$. Next, observe that $S_{T-1} \in \mathrm{PSO}(Y)$ due to the fact that $\mathrm{PSO}(Y)$ is a right ideal. Denote $\widetilde{q}_{T-1}(y) = p_{T-1} (y) - \widetilde{p}_{0,T} (y)$. In a similar way, there exists a polynomial $\widetilde{p}_{0,T-1}$ so that the operator $$\widetilde{L}_{T-1} = \widetilde{p}_{0,T-1} (y) + \widetilde{q}_{T-1} (y)\partial \in \langle G \rangle_{A_1(\R)}.$$
Note that $ S_{T-1}  - \widetilde{L}_{T-1} \partial^{T-2} = \left(  p_{T-2} (y) -  \widetilde{p}_{0,T-1} (y) \right) \partial^{T-2} + \sum_{t=0}^{T-3} p_t (y) \partial^t = : S_{T-2}$. Note that, again, $ S_{T-1} \in \langle G \rangle_{A_1(\R)}$ if and only if $S_{T-2} \in \langle G \rangle_{A_1(\R)}$. By repeating this argument, we deduce that $L \in \langle G \rangle_{A_1(\R)}$ if and only if for some operator $S_1 =\left(p_1(y)- \widetilde{q}_1 (y) \right) \partial+ p_0 (y) \in \mathrm{PSO}(Y)$ we have that $S_1 \in \langle G \rangle_{A_1(\R)}$. Now one can complete the proof by a direct application of Lemma \ref{lem:TowardsGenerator}.
\end{proof}
\end{ex}	

In the following examples, we show that the ideal $\PSO(H_p(N))$ is not principle for $p=3,4$.

\begin{ex}\label{ex:H_3(X)IsNotPrincipal} {  \rm
Let $Y=H_3(N) = N^3-3N$, where $N \sim N(0,1)$. In this example, we show that the ideal $\mathrm{PSO}(Y)$ is not principal. Appendix A of \cite{a-g-g-algebraic-stein-operators} reveals that the ideal $\mathrm{PSO}(Y)$ contain a fifth order operator with leading polynomial coefficient $p_5 (y) = y^2 -4$, and an operator of order three. So, following \cite[page 19]{Coutinho-Book}, if the ideal $\mathrm{PSO}(Y)$  was principal, then there would exist polynomials $p_0(y), p_1(y), p_2(y) \in \R[y]$ so that for the operator 
\begin{equation*}
	(y^2-4) \partial^3 + p_2 (y)\partial^2 + p_1(y) \partial + p_0 (y) \in \mathrm{PSO}(Y).
	\end{equation*}
The rest is to show that this is impossible. By contradiction, if this is the case, then Proposition 3.2 of \cite{a-g-g-algebraic-stein-operators} yields that (note that $y = H_3 (x) = x^3-3x$)
\begin{align*}
 \Gamma^3_y (p_0(y)) + \Gamma^2_y (p_1(y)) + \Gamma_y (p_2(y))   &= \Gamma_y (   \Gamma^2_y (p_0(y)) + \Gamma_y (p_1(y)) + p_2(y)     ) \\
 &=3 (x^2-1) \delta^{-1} (g(x))\\
&= -(y^2-4) = -(x^2-4)(x^2-1)^2.
\end{align*}	
This implies that the polynomial $g(x) :=  \Gamma^2_y (p_0(y)) + \Gamma_y (p_1(y)) + p_2(y)$ must be of degree five. Next, we proceed with a degree type argument on the variable $x$. First, note that the degree of the polynomial $p_2(y)$ (w.r.t the
variable $x$) is $3\deg(p_2)$. Similarly, $\deg (\Gamma_y (p_1(y))) = 3 \deg (p_1) +1$, and $\deg( \Gamma^2_y (p_0(y))) = 3 \deg(p_0)+2$. Observe that for every three integers $d_1, d_2, d_3 \in \N_0$ it is the case that $3d_1 +2 \neq 3d_2 +1, 3d_1+2 \neq 3d_3, 3d_2+1 \neq 3 d_3$. Hence, 
\begin{equation*}
\deg (g(x)) = \max \Big\{        3 \deg(p_0)+2,      3 \deg (p_1) +1,  3\deg(p_2) \Big \} =5.
\end{equation*}	 
The only possibility is that $\deg (p_0)=1$, $\deg(p_1) \le 1$ and $\deg(p_2) \le 1$. Now, we are left to show that there do not exist constants $c\neq 0$ and $a_1, b_1, a_2, b_2 \in \R$ so that 
\begin{equation*}
L:=c y + (a_1y +b_1) \partial + (a_2 y + b_2) \partial^2 + (y^2-4)\partial^3 \in \mathrm{PSO}(Y).
\end{equation*}	
This is straightforward by applying polynomials $f(y) =y^n$, $n \in \N_0$, in the requirement $\E \left[ L f (Y)\right]=0$ to obtain a contradiction. 
}
\end{ex}	

\begin{ex}\label{ex:H_4(X)IsNotPrincipal} {\rm
Let $Y=H_4(N) = N^4 -6N^2+3$, where $N\sim N(0,1)$. In this example, we show that the ideal $\mathrm{PSO}(Y)$ is not principal. Appendix A of \cite{a-g-g-algebraic-stein-operators} reveals that the ideal $\mathrm{PSO}(Y)$ contain a third order operator with leading polynomial coefficient $p_3 (y) = (y-3)(y+6)$, and an operator of order two. Now, again following \cite[page 19]{Coutinho-Book}, if the ideal $\mathrm{PSO}(Y)$  would be principal, then there would exist polynomials $p_0(y), p_1(y) \in \R[y]$ so that  
\begin{equation*}
	(y-3)(y+6) \partial^2 + p_1 (y)\partial+ p_0 (y) \in \mathrm{PSO}(Y).
\end{equation*}
The rest is to show that this is impossible. By contradiction, if this is the case, then Proposition 3.2 of \cite{a-g-g-algebraic-stein-operators} yields that (note that $y = H_4 (x) = x^4-6x^2+3$):
\begin{align*}
\Gamma^2_y (p_0(y)) + \Gamma_y (p_1(y)) &=	\Gamma_y ( \Gamma_y (p_0(y)) + p_1(y)  ) = H'_4(x) \delta^{-1} (\Gamma_y (p_0(y)) + p_1(y)) \\
&= -(y-3)(y+6) = - x^2 (x^2-3)^2 (x^2-6).
\end{align*}	
This implies that the polynomial $g(x): =\Gamma_y (p_0(y)) + p_1(y)$ is of degree six with respect to the variable $x$. On the other hand,  consider that the degree of the polynomial $p_1(y)$ (w.r.t the variable $x$) is $4 \deg(p_1)$, and, similarly, $\deg_x (\Gamma_y (p_0(y))) = 4 \deg(p_0)+2$.  Now, note that for every two non-negative integers $d_1, d_2 \in \N_0$, we have that $4d_1+2 \neq 4d_2$.  This yields that 
\begin{align*}
\deg (g(x)) = \max \Big \{      4 \deg (p_0)  +2, 4 \deg(p_1)            \Big \} =6.
\end{align*}	
This is the case only when $\deg(p_0) =1$ and $\deg(p_1) \le 1$. Now, we are left to show that there do not exist constants $c\neq 0$ and $a_1, b_1\in\mathbb{R}$ so that 
\begin{equation*}
L := cy + (a_1 y +b_1) \partial + (y-3)(y+6)\partial^2 \in \mathrm{PSO}(Y).
\end{equation*}	
But it is straightforward to see that the requirement $\E\left[ L f (Y) \right] =0$ with $f(y)=y^n$, and $n=0,1,\ldots,5$, yields that $c=0$, hence leading to a contradiction.}  
\end{ex}

\begin{rem}\label{remprinciple} {\rm Let $N\sim N(0,1)$. We conjecture that the ideal $\mathrm{PSO}(H_p(N))$ is principle if and only if $p=1,2$. Our method for proving that the ideals $\mathrm{PSO}(H_3(N))$ and $\PSO(H_4(N))$ are not principle could be applied to prove that the ideal $\mathrm{PSO}(H_p(N))$ is not principle for a given $p\geq5$ (assuming that this is the case), although the calculations would become increasingly complex as $p$ increases. 

The results of Examples \ref{ex:H_3(X)IsNotPrincipal} and \ref{ex:H_4(X)IsNotPrincipal} are of interest in the context of developing the Malliavin-Stein method for distributional approximations concerning limit distributions from the third and higher order Wiener chaoses, which is a significant open problem. Since $\mathrm{PSO}(H_3(N))$ and $\PSO(H_4(N))$ are not principle, it means that it is less obvious \emph{a priori} as to how one should determine which Stein operator(s) from the class $\mathrm{PSO}(H_p(N))$, $p\geq3$, are likely to be most well-suited for applications in the Malliavin-Stein method. Perhaps, on the flip side, there could be an added flexibility in which certain Stein operators from the class  $\mathrm{PSO}(H_p(N))$, $p\geq3$, are more suited to particular applications. In any case, the appreciation that the ideal $\mathrm{PSO}(H_3(N))$ is not principle is of potential value in guiding researchers in the development of the Malliavin-Stein method for limit distributions from third and higher order Wiener chaoses.
}   
\end{rem}

\begin{rem}\rm{
The principle property of PSO is not invariant under addition and multiplication. This can be seen from the following simple examples. Let $Y=H_3(N)$ and $X_1=N^3$ and $X_2=-3N$, so that $Y=X_1+X_2$. Then, whilst $\PSO(X_1)$ (this is verified by a similar argument to that given in Example \ref{ex:H_2(X)IsPrincipal}; we omit the details) and $\PSO(X_2)$ are principle, $\PSO(Y)$ is not principle. For multiplication, we take $Y=H_3(N)$ and $X_3=N$ and $X_4=N^2-3$, so that $Y=X_3 \times X_4$. Then, whilst $\PSO(X_3)$ and $\PSO(X_4)$ are principle, $\PSO(Y)$ is not principle.}
\end{rem}

\section{Characterising feature of \PSO($N$)}\label{sec4}

In this section, we study the characterising feature of $\PSO(N)$ for $N\sim N(0,1)$. It turns out to be convenient to study this problem by analysing solutions of differential equations that are satisfied by the Gaussian characteristic function. We open in Section \ref{sec4.1} by developing some general theory in this direction. We also provide some simple examples of non-characterising Stein operators for the standard Gaussian distribution.
In Section \ref{sec4.3}, we see that all first order polynomial Stein operators for the standard Gaussian distribution are characterising. We then move on to consider higher order polynomial Stein operators, for which the picture is much more complicated on account of the fact (as seen in Section \ref{sec4.1}) that there exist polynomial Stein operators for the standard Gaussian of order greater than one that are not characterising.




\subsection{The annihilator ideal structure}\label{sec4.1}

Denote by $\varphi_N (t)= e^{-t^2/2}$ the characteristic function of a standard Gaussian random variable $N$. Consider the left annihilator ideal $\Ann_{A_1(\mathbb{C})} (\varphi_N) : =  \big\{  \mathcal{S} \in A_1 (\mathbb{C}) \, :  \, \mathcal{S}\varphi_N =0  \big \}$.  In virtue of Theorem \ref{prop:PSO=Ideal} item (b), 
 $\Ann_{A_1(\mathbb{C})}(\varphi_N)$ is a left principal ideal generated by the operator $\mathcal{G}= \Psi (G)= \Psi (\partial -x) =  \partial +t$, where $\Psi$ is the Fourier map as introduced in Lemma \ref{lem:Anti-Isomorphism}, i.e.,  
\begin{equation*}
\Ann_{A_1(\CC)}(\varphi_N) = {}_{A_1(\CC)} \langle \mathcal{G} \rangle:= \{ \mathcal {A} \mathcal{G} \, : \,  \mathcal{A} \in A_1 (\CC)\}.
\end{equation*}

In fact, following Proposition \ref{prop:PSO_General_Targets} part (d), it is generally true that, for a real-valued random variable $X$, $\PSO(X)$ is a principal right ideal if and only if the annihilator $\Ann_{A_1 (\CC)} (\varphi_X)$ is a left principal ideal in which the generators are linked via the Fourier map $\Psi$.

The annihilator ideal $\Ann_{A_1(\CC)}(\varphi_N) $ plays a major role in the characterising Stein operators. Obviously, every Stein operator $L \in \PSO(N)$ corresponds to a unique element $\mathcal{L} \in \Ann_{A_1(\CC)}(\varphi_N) $ and vice versa via the Fourier map $\Psi$. Let us continue with the following simple observation.

\begin{prop}\label{prop:General_Characterization}
Let $ 0 \neq L \in \mathrm{PSO}(N)$ and $\mathcal{L} = \Psi (L)$ be the image of the operator $L$ under the Fourier map $\Psi$ (introduced in Lemma \ref{lem:Anti-Isomorphism}).  Then $L$ is a characterising Stein operator for the standard Gaussian distribution in the sense of Definition \ref{def:Charactrization} if and only if

\begin{equation*}
	\mathrm{Ker} (\mathcal{L}) \cap \widehat{\mathcal{P}}(\R) = \{  e^{-t^2/2}\},
\end{equation*}
where $\widehat{\mathcal{P}}(\R)$ is the class of characteristic functions of probability distributions on the real line.
\end{prop}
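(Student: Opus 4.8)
The plan is to translate the characterising property of $L$ directly into a statement about the kernel of $\mathcal{L}=\Psi(L)$ acting on characteristic functions, using the correspondence established in Proposition \ref{prop:PSO_General_Targets}. Recall that for any real-valued random variable $X$ with $Lf\in L^1(\P_X)$, applying $L$ to the real and imaginary parts of $e^{\Ci tx}$ gives $\E[Lf_t(X)] = \mathcal{L}\varphi_X(t)$ where $f_t(x)=e^{\Ci tx}$, exactly as in the proof of Proposition \ref{prop:PSO_General_Targets}(a). Hence the condition $\E[Lf(X)]=0$ for all $f\in\mathcal{F}$ at least forces $\mathcal{L}\varphi_X=0$, i.e. $\varphi_X\in\mathrm{Ker}(\mathcal{L})\cap\widehat{\mathcal P}(\R)$; conversely, by the Fourier-inversion argument already carried out in that proof, $\mathcal{L}\varphi_X=0$ is in fact \emph{equivalent} to $\E[Lf(X)]=0$ for all $f\in\mathcal{S}(\R)$ (and hence for $\mathcal{F}=\R[x]$ or $P(\R)$ when $X$ has all moments, since then these embed appropriately into the same Fourier framework). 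So I would first record this equivalence as the crux: for $L\in\PSO(N)$ and $X$ a random variable making $Lf$ integrable, $\E[Lf(X)]=0\ \forall f\in\mathcal{F} \iff \varphi_X\in\mathrm{Ker}(\mathcal{L})$.

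Given that equivalence, the proof is essentially a tautology. First I would prove the forward implication: suppose $L$ is characterising. Since $\varphi_N=e^{-t^2/2}$ satisfies $\mathcal{L}\varphi_N=0$ (as $L\in\PSO(N)$ means $\E[Lf(N)]=0$), the Gaussian characteristic function always lies in $\mathrm{Ker}(\mathcal{L})\cap\widehat{\mathcal P}(\R)$, giving $\supseteq$. For $\subseteq$, take any $\varphi_X\in\mathrm{Ker}(\mathcal{L})\cap\widehat{\mathcal P}(\R)$; by the equivalence above, $X$ satisfies $\E[Lf(X)]=0$ for all $f\in\mathcal{F}$, and since $L$ is characterising, $X=_d N$, so $\varphi_X=\varphi_N=e^{-t^2/2}$. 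Hence $\mathrm{Ker}(\mathcal{L})\cap\widehat{\mathcal P}(\R)=\{e^{-t^2/2}\}$. Conversely, if $\mathrm{Ker}(\mathcal{L})\cap\widehat{\mathcal P}(\R)=\{e^{-t^2/2}\}$ and $X$ is any random variable with $Lf\in L^1(\P_X)$ and $\E[Lf(X)]=0$ for all $f\in\mathcal{F}$, then the equivalence gives $\varphi_X\in\mathrm{Ker}(\mathcal{L})\cap\widehat{\mathcal P}(\R)$, so $\varphi_X=e^{-t^2/2}$, which is the characteristic function of $N$; by uniqueness of characteristic functions $X=_d N$, so $L$ is characterising.

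The one point requiring genuine care — and the main obstacle — is the clean statement and justification of the equivalence $\E[Lf(X)]=0\ \forall f\in\mathcal{F} \iff \mathcal{L}\varphi_X=0$, because the two directions have slightly different status. The implication $(\Rightarrow)$ is immediate once one notes $e^{\Ci tx}$ (or its real/imaginary parts) can be approximated by, or belongs to, the relevant class $\mathcal{F}$ — for $\mathcal{F}=P(\R)$ this is literally noted in Remark \ref{rem_cau}, and for $\mathcal{F}=\mathcal{S}(\R)$ one uses a standard mollification/truncation argument together with the integrability hypothesis $Lf\in L^1(\P_X)$. The implication $(\Leftarrow)$ is precisely the Fourier-inversion computation performed verbatim in the proof of Proposition \ref{prop:PSO_General_Targets}(a): writing $f(x)=\frac1{2\pi}\int_\R e^{-\Ci tx}\widehat f(t)\,dt$, interchanging $L$ with the integral (legitimate since $\widehat f\in\mathcal{S}(\R)$ and $L$ acts in $x$), and using $\E[L e^{-\Ci tX}]=\mathcal{L}\varphi_X(-t)=0$. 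So I would simply invoke that proof rather than reproduce it, and then the rest of the argument is the short logical chain above. I do not anticipate any further subtlety; the content of the proposition is really the bookkeeping that packages Proposition \ref{prop:PSO_General_Targets} into the language of Definition \ref{def:Charactrization}.
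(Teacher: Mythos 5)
Your proposal is correct and follows essentially the same route as the paper: both proofs reduce the statement to the equivalence, for a random variable $X$ with characteristic function $\varphi_X$, between $\E[Lf(X)]=0$ for all $f\in\mathcal{F}$ and $\mathcal{L}\varphi_X=0$, which is exactly the Fourier correspondence already established in the proof of Proposition \ref{prop:PSO_General_Targets}(a), and then conclude by the logical bookkeeping you describe. The only (immaterial) difference is that the paper verifies $e^{-t^2/2}\in\mathrm{Ker}(\mathcal{L})$ by writing $\mathcal{L}=\mathcal{A}(t+\partial)$ via the principal ideal structure of $\mathrm{Ann}_{A_1(\CC)}(\varphi_N)$, whereas you deduce it directly from $\E[Lf(N)]=0$.
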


\begin{proof}
	It is immediate that $\varphi_N (t) = e^{-t^2/2} \in \Ker (\mathcal{L})$ because $\mathcal{L} = \mathcal{A}(t+\partial)$ for some $\mathcal{A} \in A_1 (\CC)$ and $\mathcal{G} \varphi_N(t)= (t+\partial)e^{-t^2/2}=0$. In addition, $e^{-t^2/2} \neq \varphi \in \Ker (\mathcal{L})$ if and only if $\E[L f (X)]=\E[L f (N)]=0$ for all $f \in \mathcal{F}$, where $X$ is a random variable whose characteristic function is $\varphi$. Hence, the claim follows. 
\end{proof}

We continue with some illustrating examples. These examples show how one can construct polynomial Stein operators for the standard Gaussian of order $T\geq2$ that are not characterising.

\begin{ex}\label{ex:Moment_Assumption_Crucial}\label{extocor}{\rm 
(a)  Let $G=x-\partial  \in \mathrm{PSO}(N)$. Then, obviously, $G$ is a characterising Stein operator for the standard Gaussian distribution. This goes all the way back to Stein's seminal paper \cite{stein}. In order to apply Proposition \ref{prop:General_Characterization}, note that $\Psi (G) = \mathcal{G} = t + \partial$ and, clearly $\Ker (\mathcal{G}) \cap \widehat{\mathcal{P}}(\R) = \{  e^{-t^2/2}\}$.

\vspace{2mm}

\noindent{(b)} Let $ x_0 \in \R\setminus\{0\}$, and consider the operator $\mathcal{L} = \mathcal{A} (t+\partial) \in \mathrm{Ann}_{A_1(\CC)}(\varphi_N)$, where $\mathcal{A} = (t - \Ci x_0) +\partial \in A_1 (\CC)$. One can easily check that both characteristic functions 
\begin{equation*}
e^{-t^2/2}, \, e^{-t^2/2} e^{\Ci t x_0} \in \Ker (\mathcal{L}) \cap \widehat{\mathcal{P}}(\R).  
\end{equation*}
 These characteristic functions correspond to the $N(0,1)$ and $N(x_0,1)$ distributions, respectively. Hence, Proposition \ref{prop:General_Characterization} entails that the operator $L = \Psi^{-1}(\mathcal{L})$ is not a characterising Stein operator for the standard Gaussian distribution without assuming a further condition on the first derivative (evaluated at $t=0$) of characteristic functions $\varphi \in \widehat{\mathcal{P}}(\R) $ (note that $-\Ci \, \frac{\partial}{\partial t} (  e^{-t^2/2} e^{\Ci t x_0}  )\big \vert_{t=0}= x_0 \neq 0$). For example, with the additional assumption $\E[X]=0$ (that is $\varphi'(0)=0$), the operator $L$ is a characterising polynomial Stein operator for the $N(0,1)$ distribution.

\vspace{2mm}

\noindent{(c)} This time, let	$	{\mathcal A}=( t \partial + 2 t^2 -1  ) $  and   $ {\mathcal L}={\mathcal A}(t+\partial)
		=(t \partial^2 +3 t^2 \partial - \partial + 2 t^3 )$.
	Then, one can easily check that  the characteristic functions
\begin{equation*}
e^{-t^2/2}, \,   e^{-t^2} \in \Ker (\mathcal{L}) \cap \widehat{\mathcal{P}}(\R) 
\end{equation*}
corresponding to the Gaussian distributions $N(0,1)$ and $N(0,2)$, respectively. Hence, operator $L=\Psi^{-1}(\mathcal{L})$ is not a characterising Stein operator for the standard Gaussian distribution without assuming a further condition on the second derivative (evaluated at $t=0$) of characteristic functions $\varphi \in \widehat{\mathcal{P}}(\R) $ (note that $-\frac{\partial^2}{\partial t^2} ( e^{-t^2})\big \vert_{t=0}=2$). For example, with the additional assumption $\E[X^2]=1$ (that is $\varphi''(0)=-1$), the operator $L$ is a characterising polynomial Stein operator for the $N(0,1)$ distribution.

		\vspace{2mm}
		
		\noindent{(d)} More generally, there exist $T$-th order ($T\geq2$) differential operators in the class $\mathrm{PSO}(N)$ for which $T-1$ moment conditions must be specified for the operator $S$ to be characterising. Consider a mixture of Gaussian distributions $Y$ with density $p_Y(x)=\sum_{j=1}^T w_jp_j(x)$, where $w_1,\ldots,w_T>0$, $\sum_{j=1}^Tw_j=1$ and, for $j=1,\ldots,T$, the function $p_j$ is the density of a $N(0,\sigma_j^2)$ random variable, with $\sigma_1^2,\ldots,\sigma_T^2>0$ distinct and also $\sigma_1^2=1.$ Since $p_1,\ldots,p_T$ are linearly independent holonomic functions that satisfy first order homogeneous differential equations with polynomial coefficients, it follows that $p_Y(x)=\sum_{j=1}^T w_jp_j(x)$ is a holonomic function and satisfies a $T$-th order homogeneous differential equation of the form $Lp=0$, where $Lp(x)=\sum_{j=0}^m\sum_{k=0}^T a_{jk}x^jp^{(k)}(x)$, with real-valued coefficients $a_{jk}$ that do not depend on $w_1,\ldots,w_T$ (they do not depend on $w_1,\ldots,w_T$ because any linear combination of $p_1,\ldots,p_T$ will satisfy $Lp=0$). By a standard integration by parts argument (see, for example, the proof of Lemma 4.1 of \cite{gms19}) it follows that a Stein operator for $Y$ is given by $Sf(x)=\sum_{j=0}^m\sum_{k=0}^T (-1)^ka_{jk}\partial^k(x^jf(x))$, where $f\in P(\R)$. Since the operator $S$ has no dependence on $w_1,\ldots,w_T$, for $S$ to be a characterising Stein operator for $X\sim N(0,1)$, additional conditions must be specified to ensure that $w_1=1$ and $w_2=\cdots=w_T=0$. For a characterisation based on moment conditions, $T-1$ moments of $X$ would need to be specified; we do not need to specify $T$ moments because $w_1=1-\sum_{j=2}^Tw_j$.
		
		
}
\end{ex}

The next proposition studies the structure of the annihilator ideal $\Ann_{A_1(\CC)}(\varphi_N) $. It turns out that the null space of the annihilator ideal $\Ann_{A_1(\CC)}(\varphi_N) $ is extremely large and in fact it includes any holonomic characteristic function.  

\begin{prop}\label{prop:Ideal-Structure-In-Fourier-Side}
Let $$\mathrm{Ker} \big(  \mathrm{Ann}_{A_1(\CC)}(\varphi_N)   \big) : = \{ \mathrm{Ker} (\mathcal{L}) \, :\, \mathcal{L} \in \mathrm{Ann}_{A_1(\CC)}(\varphi_N) \} = \bigcup_{\mathcal{A} \in A_1 (\CC)} \mathrm{Ker} \left( \mathcal{A} (t+\partial) \right).$$ Then
\begin{equation*}
\mathrm{Ker} \big(  \mathrm{Ann}_{A_1(\CC)}(\varphi_N)   \big) \cap \widehat{\mathcal{P}}(\R) =  \widehat{\mathcal{P}}(\R) ^{\mathrm{Hol}},
\end{equation*}
where $\widehat{\mathcal{P}}(\R) ^{\mathrm{Hol}}$ stands for the class of all holonomic characteristic functions (\textbf{possibly discrete distributions, e.g. the Bernoulli distribution}).
\end{prop}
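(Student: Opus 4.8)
The plan is to prove the two inclusions separately, the only real tool being the Ore property of the Weyl algebra (Theorem~\ref{thm:Weyl_Algebra_Property}(b)) together with the fact, recorded just before Proposition~\ref{prop:General_Characterization}, that $\mathrm{Ann}_{A_1(\CC)}(\varphi_N)$ is the non-zero left ideal ${}_{A_1(\CC)}\langle \mathcal{G}\rangle = A_1(\CC)\mathcal{G}$ with $\mathcal{G}=t+\partial$. Here I read $\mathrm{Ker}\big(\mathrm{Ann}_{A_1(\CC)}(\varphi_N)\big)$ as the set of (sufficiently smooth) functions annihilated by \emph{some non-zero} element of $\mathrm{Ann}_{A_1(\CC)}(\varphi_N)$, consistently with the displayed union $\bigcup_{\mathcal{A}\in A_1(\CC)}\mathrm{Ker}(\mathcal{A}\mathcal{G})$.

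For the inclusion ``$\subseteq$'': suppose $\varphi\in\widehat{\mathcal P}(\R)$ and $\mathcal{L}\varphi=0$ for some non-zero $\mathcal{L}\in\mathrm{Ann}_{A_1(\CC)}(\varphi_N)$. Then $0\neq\mathcal{L}\in\mathrm{Ann}_{A_1(\CC)}(\varphi)$, so $\mathrm{Ann}_{A_1(\CC)}(\varphi)\neq 0$, which by Definition~\ref{def:Holonomic-Function}(a) is exactly the statement that $\varphi$ is holonomic; hence $\varphi\in\widehat{\mathcal P}(\R)^{\mathrm{Hol}}$. This direction is immediate from the definition of holonomicity.

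For the inclusion ``$\supseteq$'': let $\varphi\in\widehat{\mathcal P}(\R)^{\mathrm{Hol}}$, i.e.\ $\varphi$ is a holonomic characteristic function, so $\mathrm{Ann}_{A_1(\CC)}(\varphi)\neq 0$; fix a non-zero $\mathcal{M}\in\mathrm{Ann}_{A_1(\CC)}(\varphi)$. Applying the Ore property to the pair $\mathcal{M},\mathcal{G}$ gives $A_1(\CC)\mathcal{M}\cap A_1(\CC)\mathcal{G}\neq 0$; choose $0\neq\mathcal{L}$ in this intersection. Since $\mathrm{Ann}_{A_1(\CC)}(\varphi)$ is a left ideal (Proposition~\ref{prop:PSO_General_Targets}(d)) containing $\mathcal{M}$, it contains $A_1(\CC)\mathcal{M}$, whence $\mathcal{L}\varphi=0$; and $\mathcal{L}\in A_1(\CC)\mathcal{G}=\mathrm{Ann}_{A_1(\CC)}(\varphi_N)$. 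Thus $\varphi$ is annihilated by a non-zero element of $\mathrm{Ann}_{A_1(\CC)}(\varphi_N)$, so $\varphi\in\mathrm{Ker}\big(\mathrm{Ann}_{A_1(\CC)}(\varphi_N)\big)\cap\widehat{\mathcal P}(\R)$. Combining the two inclusions yields the claimed equality.

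There is no serious obstacle here; the substantive content is simply that the Ore condition forces $\mathrm{Ann}_{A_1(\CC)}(\varphi_N)\cap\mathrm{Ann}_{A_1(\CC)}(\varphi)\neq 0$ for every holonomic $\varphi$ — the algebraic mechanism underlying Theorem~\ref{thm:PSO_Intersection_NonTrivial}, and arguably the point worth emphasising. The one place to say a word: that the operators act on $\varphi$ at all. But ``holonomic characteristic function'' means by definition that $\varphi$ lies in the left $A_1(\CC)$-module of functions used throughout (by Theorem~\ref{thm:Holonomic-Closure-Properties}(b) it has only finitely many singularities, and as a bounded continuous solution of a polynomial-coefficient ODE it is in fact smooth), so this is a formality rather than a genuine difficulty.
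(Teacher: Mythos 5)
Your proof is correct and follows essentially the same route as the paper: the forward inclusion is immediate from the definition of holonomicity, and the reverse inclusion uses the left Ore property to produce a common non-zero left multiple $\mathcal{S}\mathcal{M}=\mathcal{A}\mathcal{G}$ lying in both $\mathrm{Ann}_{A_1(\CC)}(\varphi)$ and $A_1(\CC)\mathcal{G}=\mathrm{Ann}_{A_1(\CC)}(\varphi_N)$. Your explicit remarks on excluding the zero operator from the union and on why $\varphi$ lies in the relevant module are sensible clarifications of points the paper leaves implicit, but they do not change the argument.
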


\begin{proof}
It is clear that $ \Ker (  \Ann_{A_1(\CC)}(\varphi_N)   ) \cap \widehat{\mathcal{P}}(\R) \subseteq \widehat{\mathcal{P}}(\R) ^{\mathrm{Hol}}$. For the other inclusion, let $\varphi \in \widehat{\mathcal{P}}(\R) ^{\mathrm{Hol}}$. Then, there is $\mathcal{L} \in A_1 (\CC)$ such that $\mathcal{L} \varphi=0$. On the other hand, by the left Ore property, one can write $ \mathcal{S} \mathcal{L} = \mathcal{A}(t+\partial)$ for some elements $\mathcal{S}, \mathcal{A} \in A_1 (\CC)$. Hence, $\mathcal{A}(t+\partial) \in \Ann_{A_1(\CC)} (\varphi_N)$ and $ \mathcal{A}(t+\partial) \varphi =0$.  Therefore, $\varphi \in \Ker ( \mathcal{A}(t+\partial) )$ and we are done.
\end{proof}

\subsection{Characterising Stein operators for the Gaussian distribution
}\label{sec4.3}

In the following proposition, we show that first order polynomial Stein equations for the Gaussian distribution are characterising in the sense of Definition \ref{def:Charactrization}. An immediate consequence is that Stein operator $S_m : = H_{m-1} \partial - H_{m} \in \mathrm{PSO} (N)$, $m\geq1$, of \cite{gr05} is characterising.



\begin{prop}\label{cor:Hermite-Coeff-Is-Characterizing}
Let $q \in \R[x]$ be a non-zero polynomial with real coefficients and $\delta$ stands for the adjoint operator. Then, the operator 
\[L = q(x) \partial - \delta q(x) \in \mathrm{PSO} (N)\]
is a characterising polynomial Stein operator for the standard Gaussian distribution in the sense of Definition \ref{def:Charactrization}.

In particular,
the operator $S_m : = H_{m-1} \partial - H_{m} \in \mathrm{PSO} (N)$, $m\geq1$, is a characterising polynomial Stein operator for the standard Gaussian distribution.
\end{prop}

\begin{proof}
   We first note the following simple connection between the Stein operator $q(x) \partial - \delta q(x)$ and the classical Stein operator $\partial - x$. Consider the Stein operator $f'(x) - xf(x)$. Let $f(x)=q(x)g(x)$. Then $f'(x) - xf(x) = q(x) g'(x) -\delta q(x) g(x)$. Now, let $X$ be a real-valued random variable with density $p_X(x)$ and suppose that $\E[q(X) g'(X) - \delta q(X) g(X)]=0$ for all $g\in \mathcal{G}$ for some suitable class of functions $\mathcal{G}$. Then, by an integration by parts argument, we have that $\int_\R (p_X'(x)+xp_X(x)) q(x)g(x) \,dx=0$ for all $g\in \mathcal{G}$. We thus deduce that $p_X(x)$ satisfies the ODE $p_X'(x)+xp_X(x)=0$. On solving this ODE subject to the condition that $p_X(x)$ is the probability density function of a real-valued random variable (so that $\int_\R p_X(x)\,dx=1)$ we deduce that $p_X(x)=(2\pi)^{-1/2}e^{-x^2/2}$, and therefore $X\sim N(0,1)$. 

That the operator $S_m : = H_{m-1} \partial - H_{m} \in \mathrm{PSO} (N)$ is a characterising polynomial Stein operator for the standard Gaussian distribution now follows on taking $q(x)=H_{m-1}(x)$ and using that $\delta H_{m-1}(x)=H_m(x)$ (see Definition \ref{def:Hermite_Polynomials}).
\end{proof}

We now consider the characterising property of higher order Stein operators. In the light of part (d) of Example \ref{ex:Moment_Assumption_Crucial},
we know that there exist many higher order Stein operators that are not characterising. Our aim here is to attempt to give some examples of general classes of higher order polynomial Stein operators for the Gaussian distribution that are characterising.  The most natural operator to start with is perhaps the higher order counterpart of the Stein operator $S_m$ given in Proposition \ref{cor:Hermite-Coeff-Is-Characterizing}, namely the Stein operator  $L_m=\partial^m - H_m (x)$, $m\geq1$. As noted in the introduction, the fact that $L_m\in\mathrm{PSO}(N)$, $m\geq1$, was known to \cite{gr05}. The proof of the following proposition is technical and is given in Appendix \ref{appa}.

\begin{prop}\label{prop4.5}
	Let $ m\ge 1$ be fixed.  Then, the operator $L_m = \partial^m - H_m  \in \mathrm{PSO}(N) $
is a characterising polynomial Stein operator for the standard Gaussian distribution among those random variables that are symmetric (that is $X=_d-X$) and have an infinitely divisible distribution. When $m=1,2,3$, the characterisation holds without the need for the symmetric and infinitely divisible distribution assumptions.
\end{prop}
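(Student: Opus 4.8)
The plan is to work on the Fourier side, where $L_m = \partial^m - H_m(x)$ corresponds under $\Psi$ to an operator $\mathcal{L}_m \in \mathrm{Ann}_{A_1(\CC)}(\varphi_N)$, and then use Proposition \ref{prop:General_Characterization}: it suffices to show that $\mathrm{Ker}(\mathcal{L}_m) \cap \widehat{\mathcal{P}}(\R) = \{e^{-t^2/2}\}$ when we restrict to characteristic functions of symmetric infinitely divisible laws. First I would compute $\mathcal{L}_m = \Psi(L_m)$ explicitly. Since $\Psi(x^n\partial^k) = \Ci^{k-n} t^k \partial_t^n$, the term $\partial^m = \partial^m x^0$ maps to $\Ci^{-m} t^m$ (multiplication), and $H_m(x)$, being a polynomial of degree $m$ in $x$ alone, maps to a differential operator $\Ci^{?}H_m(\text{something})$ in $\partial_t$; after collecting, $\mathcal{L}_m$ factors through $\mathcal{G} = t+\partial$ (as it must, being in the annihilator ideal), and I expect something like $\mathcal{L}_m = \text{(polynomial or differential factor)} \cdot (t+\partial)$ up to a power of $\Ci$. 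The key structural point to extract is that $\varphi \in \mathrm{Ker}(\mathcal{L}_m)$ means $\mathcal{G}\varphi = \psi$ where $\psi$ lies in the kernel of the complementary factor, and then via Corollary \ref{cor:Shape-CFs-InTheIdeal} any such $\varphi$ has the form $\varphi(t) = e^{-t^2/2}(1 + \int_0^t e^{s^2/2}\psi(s)\,ds)$ with $\psi$ holonomic of a very constrained type.

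Next I would bring in the symmetry and infinite divisibility. Symmetry forces $\varphi$ real and even, which kills the odd part of the correction term and halves the degrees of freedom. Infinite divisibility means $\varphi(t) = e^{\eta(t)}$ for a continuous $\eta$ with $\eta(0)=0$ (the Lévy–Khintchine exponent), and symmetry makes $\eta$ real and even; writing $\varphi' = \eta'\varphi$, the condition $\mathcal{L}_m\varphi = 0$ becomes an ODE/functional constraint on $\eta$. The strategy is to show this constraint, combined with $\varphi$ being a genuine (bounded, positive-definite) characteristic function of an infinitely divisible law, forces $\eta(t) = -t^2/2$. I would exploit that $|\varphi(t)| \le 1$ for all real $t$ together with the precise growth/oscillation of the candidate solutions $e^{-t^2/2}(1+\int_0^t e^{s^2/2}\psi(s)\,ds)$: for a nonzero holonomic $\psi$ the integral term generically makes $\varphi$ blow up like $e^{+t^2/2}\cdot(\text{something})$ or oscillate unboundedly, contradicting boundedness — unless $\psi \equiv 0$, i.e. $\mathcal{G}\varphi = 0$, i.e. $\varphi = e^{-t^2/2}$. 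The infinite-divisibility hypothesis should enter to rule out the borderline cases where cancellation keeps $\varphi$ bounded: since $\varphi$ never vanishes (infinitely divisible characteristic functions have no zeros), $\eta = \log\varphi$ is globally defined and holonomic-type asymptotics on $\eta$ pin it down.

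For the special cases $m = 1, 2, 3$ I would argue that the complementary factor has degree low enough that the boundedness argument alone (without infinite divisibility or even symmetry) suffices: for $m=1$ this is literally Proposition \ref{prop:L_nOperators} with $n=0$; for $m=2,3$ the operator $\mathcal{L}_m$ is $\partial^k\mathcal{G}$ or a short combination $\sum c_n \mathcal{L}_n$, so Proposition \ref{prop:L_nOperators_Combiniation} applies directly once I check $H_2$ and $H_3$ produce coefficients that are literally powers of $x$ times constants after expansion — and indeed $H_2 = x^2-1$, $H_3 = x^3 - 3x$ are combinations of monomials $x^n$, whose images $\mathcal{L}_n$ are covered by that proposition. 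The genuine obstacle is the general $m$ case: there $H_m$ has more terms and $\mathcal{L}_m$ is \emph{not} simply of the form $\sum_n c_n \partial^n \mathcal{G}$, so Proposition \ref{prop:L_nOperators_Combiniation} does not apply verbatim, and I must run the boundedness/infinite-divisibility analysis on the explicit integral representation. Controlling the asymptotic behaviour of $\int_0^t e^{s^2/2}\psi(s)\,ds$ for the relevant holonomic $\psi$ — showing it cannot conspire with the Gaussian prefactor to stay bounded unless it vanishes — is where the real work lies, and is presumably why the authors defer it to Appendix \ref{appa} and need the extra distributional hypotheses.
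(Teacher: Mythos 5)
Your high-level plan (pass to the Fourier side and show $\mathrm{Ker}(\mathcal{L}_m)\cap\widehat{\mathcal{P}}(\R)=\{e^{-t^2/2}\}$ under the stated hypotheses) is the right one and is the paper's, but there are two genuine problems. First, the shortcut you propose for $m=2,3$ is wrong. The operators covered by Propositions \ref{prop:L_nOperators} and \ref{prop:L_nOperators_Combiniation} are the Fourier images of \emph{first-order} Stein operators $p(x)\partial-\delta p(x)$; they all have the form $P(\partial)\,\mathcal{G}$ with a \emph{constant-coefficient} left factor $P(\partial)=\sum_n c_n\partial^n$. The operator $\mathcal{L}_m=\Psi(\partial^m-H_m)=\mathrm{i}^mt^m-H_m(-\mathrm{i}\partial)$ is an $m$-th order operator in $\partial_t$ and is not of that form for any $m\ge2$: for instance $\mathcal{L}_2=\partial^2+1-t^2=(\partial-t)(\partial+t)$, whose left factor $\partial-t$ involves $t$, and no constants $c_0,c_1$ give $c_0\mathcal{L}_0+c_1\mathcal{L}_1=\partial^2+1-t^2$ (matching the coefficient of $\partial$ would force $c_0+c_1t\equiv0$). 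The fact that $H_m$ is a sum of monomials does not help, because $\partial^m-x^n$ is not a linear combination of the first-order operators $x^n\partial+(nx^{n-1}-x^{n+1})$; this is precisely why the paper treats $L_m$ in a separate ``higher order'' subsection with its own proof. The cases $m=2,3$ are instead handled there by showing that every non-Gaussian solution of $\mathcal{L}_m\varphi=0$ is unbounded at infinity, hence not a characteristic function.

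Second, the analytic core of the proof is missing. You correctly reduce to showing that no nonzero $\psi$ can occur in the representation $\varphi(t)=e^{-t^2/2}\bigl(1+\int_0^te^{s^2/2}\psi(s)\,ds\bigr)$, but ``generically blows up or oscillates unboundedly'' is not an argument, and for $m\ge4$ the unboundedness claim is false. The paper's dominant-balance analysis at the irregular singular point $t=\infty$ shows that $\mathcal{L}_m\varphi=0$ has $m$ independent solutions $\varphi_{j,m}(t)=\exp\bigl(-\omega_m^{j-1}t^2(1+o(1))/2\bigr)$ indexed by the $m$-th roots of unity, and for $m\ge4$ several non-Gaussian roots satisfy $0\le\mathrm{Re}(\omega_m^{j-1})<1$ with nonzero imaginary part, so the corresponding solutions are \emph{bounded} but oscillatory. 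Boundedness of characteristic functions therefore only kills the modes with negative real part; the remaining oscillatory modes are eliminated precisely because symmetry forces $\varphi$ to be real-valued and infinite divisibility forces $\varphi$ to be zero-free, whereas any nonzero real combination involving those modes has zeros. Your sketch gestures at the non-vanishing property, but without producing the explicit root-of-unity asymptotics there is no way to see which coefficients boundedness eliminates, why the extra hypotheses are needed only for $m\ge4$, or why $m=1,2,3$ are special (all non-Gaussian roots then have negative real part). That computation is the actual content of the paper's Appendix \ref{appa} proof, and it is absent from your proposal.
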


\begin{rem}\rm{In Proposition \ref{prop4.5}, our characterisation of the standard Gaussian distribution has additional symmetry and infinite divisibility conditions when $m\geq4$. 
We conjecture that these additional conditions are not necessary to ensure characterisation, but are rather artefacts resulting from our proof. We leave it as an interesting open problem to verify or disprove this conjecture. Similar comments apply for Corollary \ref{barops} and Proposition \ref{prop4.6} below.}
\end{rem}

We can generalise Proposition \ref{prop4.5} to deduce a characterisation result for the Stein operators $S(k,t)=H_{k}\partial^t-H_{k+t}\in\PSO(N)$, $k,t\geq1$. Recall that the fact that $S(k,t)\in\mathrm{PSO}(N)$, $k,t\geq1$, was known to \cite{b86}.

\begin{cor}\label{barops}
Let $k, t\geq1$. Then, the operator $H_k \partial^t - H_{k+t} \in \mathrm{PSO}(N)$ is a characterising Stein operator for the standard Gaussian distributions  among those random variables that are symmetric and have an infinitely divisible distribution. When $t=1,2,3$, the characterisation holds without the need for the symmetric and infinitely divisible distribution assumptions. 
\end{cor}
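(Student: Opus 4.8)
The plan is to reduce Corollary~\ref{barops} to Proposition~\ref{prop4.5} via the same functional substitution trick that appears in Remark~\ref{rem11}. Concretely, I would first establish the operator identity: for $f \in \mathcal{S}(\R)$ and $g$ defined by $f = H_k \cdot g$, one has
\begin{equation*}
\partial^t f(x) - H_t(x) f(x) = \text{(something involving } H_k g \text{ and lower Hermite terms)}.
\end{equation*}
Actually, the cleaner route is the other direction: I would show that if $X$ satisfies $\E[(H_k \partial^t - H_{k+t})g(X)] = 0$ for all $g$ in the test class, then by writing out the Gaussian integration by parts (Proposition~\ref{prop:GIBP-d=1}, or rather its analogue for the density $p_X$) this forces $\E[(\partial^t - H_t)h(X)] = 0$ for all $h$ in a suitable class, after an appropriate reparametrisation. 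The key algebraic fact I would lean on is the Hermite product/recursion structure: $H_{k+t}$ relates to $\delta^t H_k$, and $S(k,t) = H_k \partial^t - \delta^t H_k$, so that $S(k,t)$ is obtained from $L_t = \partial^t - H_t = \partial^t - \delta^t 1$ by "conjugating" the constant function $1$ by $H_k$. Tracking this substitution through the integration-by-parts characterisation should convert the ODE that $p_X$ must satisfy (derived in the proof of Proposition~\ref{prop4.5}) into the same ODE, because multiplying the test function by the fixed polynomial $H_k$ does not change the differential equation imposed on $p_X$ — only the class of test functions against which it is tested, and $H_k$ being a fixed nonzero polynomial means $H_k g$ still ranges over a rich enough class (polynomials times $H_k$ are dense in the relevant sense, or one simply notes $H_k$ has only finitely many zeros).

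The key steps in order: (1) State and verify the operator identity connecting $S(k,t)$ and $L_t$ under $f \mapsto H_k^{-1} f$ (formally, restricting to $f$ of the form $H_k g$); this is a computation with the Leibniz rule and the facts $\partial H_m = m H_{m-1}$, $H_{m+1} = x H_m - \partial H_m$ from Proposition~\ref{prop:Hermite_Polynomials_Properties} and Definition~\ref{def:Hermite_Polynomials}. (2) Given a symmetric, infinitely divisible $X$ with $\E[(H_k\partial^t - H_{k+t})g(X)] = 0$ for all test functions $g$, deduce via the integration-by-parts / Fourier argument that the characteristic function $\varphi_X$ lies in $\Ker(\mathcal{S}(k,t))$ where $\mathcal{S}(k,t) = \Psi(S(k,t))$. (3) Relate $\Ker(\mathcal{S}(k,t)) \cap \widehat{\mathcal{P}}(\R)$ to $\Ker(\mathcal{L}_t) \cap \widehat{\mathcal{P}}(\R)$ where $\mathcal{L}_t = \Psi(L_t)$, using that $\Psi$ is an anti-isomorphism (Lemma~\ref{lem:Anti-Isomorphism}) and that the extra polynomial factor $H_k$ transforms under $\Psi$ into a differential operator that does not enlarge the kernel among characteristic functions — invoking the same injectivity-of-Fourier-transform reasoning as in Propositions~\ref{prop:L_nOperators} and \ref{prop:L_nOperators_Combiniation}. (4) Conclude by Proposition~\ref{prop4.5} that $X \sim N(0,1)$, with the symmetry/infinite-divisibility hypotheses dropped in the cases $t=1,2,3$ exactly as there.

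The main obstacle I anticipate is step~(3): making precise in what sense the factor $H_k$ is "harmless". On the Fourier side, $\Psi$ sends multiplication by $H_k$ to a degree-$k$ differential operator $P_k(\Ci\partial)$, so $\mathcal{S}(k,t) = P_k(\Ci\partial)\,\partial^t\,\mathcal{G}$ up to the Hermite reshuffling, and one must argue that $P_k(\Ci\partial)\partial^t \mathcal{G}\varphi = 0$ with $\varphi$ a characteristic function still forces $\mathcal{G}\varphi = 0$. The clean way is to take Fourier transforms: $\BF(\mathcal{S}(k,t)\varphi)(\xi)$ becomes a polynomial in $\xi$ times $\BF(\mathcal{G}\varphi)(\xi)$ — wait, this is not quite right because $P_k(\Ci\partial)$ is not a multiplier on the Fourier side, it is a polynomial in $\xi$ after $\BF$; in fact $\BF$ turns $P_k(\Ci\partial)$ into multiplication by $P_k(\Ci\cdot(\Ci\xi)) = P_k(-\xi)$ roughly, so one still gets $\BF(\mathcal{G}\varphi)(\xi) \cdot Q(\xi) = 0$ for a polynomial $Q$, and since $Q$ has finitely many roots and $\BF(\mathcal{G}\varphi)$ is continuous, $\BF(\mathcal{G}\varphi) \equiv 0$, whence $\mathcal{G}\varphi = 0$ and $\varphi = \varphi_N$ — \emph{but} this would prove the stronger statement without symmetry or infinite divisibility, so evidently the obstruction is that $\mathcal{S}(k,t)$ is \emph{not} simply of the form $Q(\Ci\partial)\mathcal{G}$ with $Q$ a polynomial in $\partial$ alone; the Hermite structure mixes $t$ and $x$, so $\mathcal{S}(k,t) = \mathcal{A}\,\mathcal{G}$ with $\mathcal{A}$ a genuine element of $A_1(\CC)$ involving both $t$ and $\partial_t$, and then the Fourier-transform argument yields $\BF(\mathcal{G}\varphi)(\xi)$ satisfying an ODE rather than a polynomial equation. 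This is precisely why the reduction must go back to Proposition~\ref{prop4.5} (whose proof handles the genuinely higher-order ODE for $p_X$ under the symmetry/infinite-divisibility assumptions) rather than to the elementary Propositions~\ref{prop:L_nOperators}--\ref{prop:L_nOperators_Combiniation}, and the careful bookkeeping of which density ODE $S(k,t)$ imposes — showing it coincides with the one $L_t$ imposes — is where the real work lies.
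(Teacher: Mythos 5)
Your overall strategy---reduce $S(k,t)=H_k\partial^t-H_{k+t}$ to $L_t=\partial^t-H_t$ via the substitution $f=H_kg$ and an integration-by-parts argument, then invoke Proposition \ref{prop4.5} for the resulting density ODE---is exactly the route the paper takes, and your closing observation that ``the careful bookkeeping of which density ODE $S(k,t)$ imposes --- showing it coincides with the one $L_t$ imposes --- is where the real work lies'' correctly locates the crux. (Your Fourier-side detour is rightly abandoned by you: if $\mathcal{S}(k,t)$ factored as $Q(\partial)\mathcal{G}$ with $Q$ a polynomial in $\partial$ alone, the operator would be characterising with no side conditions, contradicting the need for the symmetry/infinite-divisibility hypotheses.)

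However, the step you defer to ``a computation with the Leibniz rule'' is precisely where the argument is not routine, and you should not expect the clean identity you are hoping for. Setting $f=H_kg$, the Leibniz rule together with $\partial^jH_k=\frac{k!}{(k-j)!}H_{k-j}$ and the Hermite product formula $H_tH_k=\sum_{j}\binom{t}{j}\binom{k}{j}j!\,H_{k+t-2j}$ gives
\begin{equation*}
\partial^t\bigl(H_kg\bigr)-H_tH_kg \;=\; S(k,t)g+\sum_{j\geq1}\binom{t}{j}\frac{k!}{(k-j)!}\,S(k-j,t-j)g,
\end{equation*}
so $L_t(H_kg)$ and $S(k,t)g$ agree pointwise only when $t=1$ (the case of Remark \ref{rem11}); for $t\geq2$ they differ by a nonzero sum of lower-order elements of $\PSO(N)$ (e.g.\ for $k=1$, $t=2$ the discrepancy is $2(g'-xg)$). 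Equivalently, the formal adjoint of $S(k,t)$ is $p\mapsto(-1)^t\partial^t(H_kp)-H_{k+t}p$, which is \emph{not} $H_k\cdot\bigl((-1)^t\partial^tp-H_tp\bigr)$: the ODE imposed on $p_Y$ by $S(k,t)$ is a genuinely different $t$-th order equation from the one imposed by $L_t$ (both are satisfied by the Gaussian density, via Rodrigues, but they are not the same operator $T$). Consequently one cannot simply quote the conclusion of Proposition \ref{prop4.5}; either the asymptotic analysis of that proof must be redone for the new ODE (which now has singular points at the zeros of $H_k$), or the correction terms $S(k-j,t-j)$ --- which have strictly smaller differential order --- must be absorbed by an induction on $t$. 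The paper's own proof asserts the pointwise identity and the coincidence of the operators $T$ without addressing this, so your proposal reproduces the paper's argument gap and all; but since you explicitly flagged this step as the real work and left it undone, the proposal as written is incomplete exactly there.
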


\begin{proof}
We deduce the result from Proposition \ref{prop4.5} by applying the technique used in the proof of Proposition \ref{cor:Hermite-Coeff-Is-Characterizing}. Consider the Stein operator $\partial^t f(x) - H_t(x)f(x)$. Now, let $X$ be a real-valued random variable with density $p_X(x)$ and suppose that $\E[ f^{(t)}(X) - H_{t}(X) f(X)]=0$ for all $f\in \mathcal{F}$ for some suitable class of functions $\mathcal{F}$. Then, by an integration by parts argument, we have that $\int_\R  f(x) Tp_X(x) \,dx=0$ for all $f\in \mathcal{F}$, for some linear differential operator $T$ (the particular form that the operator $T$ takes is not important to our argument). We thus deduce that $p_X(x)$ satisfies the ODE $Tp_X(x)=0$. By proposition \ref{prop4.5}, we can infer that the only solution to this ODE that satisfies the properties of a probability density function is $p_X(x)=(2\pi)^{-1/2}e^{-x^2/2}$ if $t=1,2,3$, and again $p_X(x)=(2\pi)^{-1/2}e^{-x^2/2}$ if we impose the additional assumptions that $p_X(x)$ must be the density of a symmetric and infinitely divisible random variable when $t\geq4$.

Now, let $f(x)=H_k(x)g(x)$. Then we get that $\partial^t f(x) - H_t(x)f(x) = H_k(x)\partial^t g(x) - H_{k+t}(x)g(x)$.
Let $Y$ be a real-valued random variable with density $p_Y(x)$ and suppose that $\E[H_{k}(Y) g^{(t)}(Y) - H_{k+t}(Y) g(Y)]=0$ for all $g\in \mathcal{G}$ for some suitable class of functions $\mathcal{G}$. Then, by the same integration by parts argument as before, we have that $\int_\R  H_{k}(x)g(x)Tp_Y(x) \,dx=0$ for all $g\in \mathcal{G}$, where $T$ is the same operator as introduced earlier in this proof. We thus deduce that $p_Y(x)$ satisfies the ODE $Tp_Y(x)=0$, the same ODE as satisfied by $p_X(x)$. Thus, under the relevant assumptions for the cases $t=1,2,3$ and $t\geq4$, the only solution that satisfies the properties of a probability density function is $p_Y(x)=(2\pi)^{-1/2}e^{-x^2/2}$, and therefore $Y\sim N(0,1)$. 
\end{proof}
	

Another interesting class of higher order Stein operators for the standard Gaussian distribution  can be derived via the so-called Rodriguez formula for Hermite polynomials. In fact,   the Rodriguez formula  implies that the characteristic function $\varphi_N (t) = e^{-t^2/2}$ satisfies the ODE
\begin{equation}\label{eq:Rodriguez-formula-Hermite}
\partial^m \varphi_N ( t)  = (-1)^m H_m (t) \varphi_N(t), \quad \forall \, m \ge 1.
 \end{equation}
Let $\mathcal{R}_m  = \partial^m - (-1)^m H_m \in \mathrm{Ann}_{A_1 (\CC)} (\varphi_N)$ and  hence $R_m = \Psi^{-1}(\mathcal{R}_m)=(-1)^mH_m(-\mathrm{i}\partial)-\mathrm{i}^mx^m \in \PSO(N)$ (possibly after a multiplication by a suitable power of $\mathrm{i}$ to get real coefficients). The polynomial Stein operators $R_m$ are particularly interesting
whenever the sole $m$-th  moment matching $\E[X^m] = \E[N^m]$ is applicable. The proof of the following proposition is given in Appendix \ref{appa}. 



\begin{prop}\label{prop4.6}Let $ m\ge 1$ be fixed. 
Then, the operator $R_m=(-1)^mH_m(-\mathrm{i}\partial)-\mathrm{i}^mx^m \in \mathrm{PSO}(N) $ is a characterising polynomial Stein operator for the standard Gaussian distribution among those random variables that are symmetric and have an infinitely divisible distribution. When $m=1,2,3$, the characterisation holds without the need for the symmetric and infinitely divisible distribution assumptions.
\end{prop}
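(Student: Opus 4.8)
The plan is to reduce the characterisation problem for $R_m$ to an ODE argument on the characteristic function, closely mirroring the strategy behind Proposition \ref{prop4.5}. Suppose $X$ is a real-valued random variable with characteristic function $\varphi_X$ such that $\E[R_m f(X)] = 0$ for all $f$ in the relevant test-function class. Passing through the Fourier map $\Psi$ as in Lemma \ref{lem:Anti-Isomorphism} (applying $R_m$ to the real and imaginary parts of $e^{\mathrm{i}tX}$ as in the proof of Proposition \ref{prop:PSO_General_Targets}), this is equivalent to saying that $\varphi_X \in \mathrm{Ker}(\mathcal{R}_m)$, i.e. $\varphi_X$ satisfies the ODE
\begin{equation*}
\partial^m \varphi_X(t) = (-1)^m H_m(t)\,\varphi_X(t), \qquad t\in\R.
\end{equation*}
By the Rodriguez-type identity \eqref{eq:Rodriguez-formula-Hermite}, $\varphi_N(t) = e^{-t^2/2}$ is one solution, so the task is to show that, among characteristic functions of real-valued random variables (subject to symmetry and infinite divisibility when $m\ge 4$), it is the only one.

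First I would reduce to an initial-value problem. Since $\varphi_X$ is a characteristic function, $\varphi_X(0) = 1$; and because the ODE is $m$-th order we must pin down $\varphi_X'(0),\dots,\varphi_X^{(m-1)}(0)$. Here the symmetry assumption $X =_d -X$ forces $\varphi_X$ to be real and even, killing all odd-order derivatives at $0$; the remaining even-order data $\varphi_X^{(2)}(0),\dots$ are moments $(-1)^j\E[X^{2j}]$. The key point is that the only admissible solution must be a genuine characteristic function that is moreover infinitely divisible, and one shows (this is where the infinite-divisibility hypothesis enters, via the Lévy--Khintchine representation) that the only infinitely divisible, symmetric characteristic function solving the ODE is the Gaussian one: any other solution of the linear ODE through the same base point either fails to be positive definite or has the wrong growth/analyticity to be a characteristic function at all. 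For $m=1,2,3$ one argues more directly: the ODE is low order, the space of solutions is small, and one checks by hand — using only $\varphi_X(0)=1$ and, where needed, a single moment/derivative condition automatically supplied by $\varphi_X$ being a characteristic function — that no competing characteristic function exists, so the symmetry and infinite-divisibility hypotheses can be dropped in those cases.

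The main obstacle I anticipate is the characterisation step within the solution space of the ODE: the general solution of $\partial^m\varphi = (-1)^m H_m\varphi$ is an $m$-dimensional space of entire functions (all of the form $e^{-t^2/2}$ times a solution of an associated equation, which one can analyse via the substitution $\varphi(t) = e^{-t^2/2}\psi(t)$, turning the equation into one for $\psi$ whose solutions grow like $e^{+t^2/2}$ unless a cancellation occurs), and one must rule out every non-Gaussian member as a characteristic function. For $m\le 3$ this is a finite, explicit computation. For $m\ge 4$ the clean tool is infinite divisibility: write $\varphi_X = e^{\eta}$ with $\eta$ the Lévy exponent, feed this into the ODE, and extract enough structure on $\eta$ (polynomial of degree $\le 2$, hence Gaussian) — the technical work is controlling the nonlinear interaction between $\eta$ and its derivatives forced by the linear ODE. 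Once $\varphi_X = \varphi_N$ is established, injectivity of the characteristic function gives $X\sim N(0,1)$, completing the proof.
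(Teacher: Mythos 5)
Your reduction is correct as far as it goes: passing through the Fourier map $\Psi$, the hypothesis $\E[R_mf(X)]=0$ for all test functions is equivalent to $\varphi_X\in\mathrm{Ker}(\mathcal{R}_m)$, i.e.\ to $\varphi_X$ solving the $m$-th order linear ODE $\partial^m\varphi=(-1)^mH_m\varphi$, and the whole problem is to show that the Gaussian is the only characteristic function (subject to the stated side conditions) in the $m$-dimensional solution space. But that is precisely the step you do not carry out, and the two mechanisms you propose for it are not the ones that work. The initial-value viewpoint at $t=0$ cannot close the argument, because symmetry only kills the odd-order derivatives at $0$ while the even-order ones are unconstrained moments; and the L\'evy--Khintchine route (write $\varphi_X=e^{\eta}$ and force $\eta$ to be quadratic) is left entirely as ``the technical work'': substituting $e^{\eta}$ into an $m$-th order linear ODE produces a nonlinear equation in $\eta,\eta',\dots,\eta^{(m)}$ from which degree-two polynomiality of $\eta$ does not follow in any evident way. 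Likewise, for $m=1,2,3$ the claim that one ``checks by hand'' using $\varphi(0)=1$ and a derivative condition at $0$ misidentifies the mechanism: no moment condition is available (or needed) in those cases.

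What the paper actually does is an asymptotic analysis at the irregular singular point $t=\infty$, exactly as in Proposition \ref{prop4.5}. By the method of dominant balance (ansatz $\varphi=e^{S}$), the $m$ linearly independent solutions of $\mathcal{R}_m\varphi=0$ behave like $\exp\bigl(-\omega_m^{j-1}t^2/2\,(1+o(1))\bigr)$, $j=1,\dots,m$, with $\omega_m$ an $m$-th root of unity. For $m=2,3$ every non-Gaussian branch has $\mathrm{Re}(\omega_m^{j-1})<0$ and blows up, so boundedness of characteristic functions alone forces the corresponding coefficients to vanish --- this is why no extra hypotheses are needed there. For $m\ge4$ some branches are bounded but oscillatory, and it is precisely the non-vanishing of infinitely divisible characteristic functions together with the real-valuedness coming from symmetry that eliminates the remaining coefficients. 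Your proposal names the right objects but is missing this decisive classification of the solution space by growth and oscillation at infinity, so as written it does not constitute a proof.
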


\section{A general result on the characterising feature of polynomial Stein operators}\label{secnew}

In Section \ref{sec4}, we saw examples of non-trivial elements of $\PSO(N)$ that are not characterising in the sense of Definition \ref{def:Charactrization}. This non-characterising property in fact applies for any distribution with a holonomic density or holonomic characteristic function, and is a consequence of the following general theorem.


\begin{thm}\label{thm:PSO_Intersection_NonTrivial}
	Fix $n \ge 1$. Let $X_1, \ldots, X_n$	be continuous random variables. Suppose that the characteristic functions of $X_1,\ldots,X_n$ are holonomic, or $X_1, \ldots, X_n$	have holonomic densities. Then
	\begin{equation}\label{eq:POS-Intersection-General}
		\mathrm{PSO}(X_1) \cap \cdots \cap \mathrm{PSO}(X_n) \neq \{0\}.
	\end{equation}
\end{thm}

\begin{proof}
	First note that the assumption that the characteristic functions of $X_1,\ldots,X_n$ are holonomic implies that $\PSO(X_k) \neq \{0\}$ for each $k=1,\ldots,n$, by Proposition \ref{prop:PSO_General_Targets}, item (a). Similarly, the assumption that densities of $X_1,\ldots,X_n$ are holonomic  implies that $\PSO(X_k) \neq \{0\}$ for each $k=1,\ldots,n$, by Proposition \ref{prop:PSO_General_Targets}, item (b). On the other hand, by Stafford's Theorem \ref{thm:Stafford} each right ideal $\PSO(X_k) = \langle G^k_1, G^k_2 \rangle$  for two generators $G^k_1$ and $G^k_2$. Since Weyl algebras are right Noetherian integral domains they are right Ore domains \cite[Theorem 1.15]{mr-Book87}. Therefore, letting $R = A_1(\R)$, we have
	\begin{equation*}
		\bigcap_{k=1}^{n} G^k_1 R \cap G^k_2 R  \neq \{0\}.
	\end{equation*} 
	On the other hand, $ G^k_1 R \cap G^k_2 R  \subseteq \PSO(X_k)$ for each $k=1,\ldots,n$, and whence the claim follows.
\end{proof}

\begin{cor}\label{cor:PSO_Intersection_NonTrivial}
	Let $N \sim N(0,1)$ and $X$ be a continuous random variable whose characteristic function is holonomic, or whose density is holonomic. Then
	\begin{equation*}
		\mathrm{PSO}(N) \cap \mathrm{PSO}(X) \neq \{0\}.
	\end{equation*} 
\end{cor}

\begin{rem}\label{rem4.2}{  \rm (a) In the statement of Theorem \ref{thm:PSO_Intersection_NonTrivial}, the supports of the random variables $X_1,\ldots,X_n$ do not need to be the same. In fact, one can even consider random variables having disjoint supports. Let $X$ and $Y$ be two continuous random variables whose characteristic functions are holonomic or whose densities are holonomic with disjoint supports $\mathbb{D}_X$ and $\mathbb{D}_Y$, respectively. Let $\mathbb{U}_X$ and $\mathbb{U}_Y$ be two disjoint open domains so that $\mathbb{U}_X \supseteq   \mathbb{D}_X$, $ \mathbb{U}_Y \supseteq   \mathbb{D}_Y$. It is evident that $\E[S_X  f(Y)]=0$ for all $f\in \mathcal{S}(\mathbb{U}_X)$, where $S_X \in \PSO(X)$. In addition, clearly $ \mathcal{S}(\mathbb{U}_X)  \subseteq \mathcal{S}(\R)$; however, the requirement $\E[S_X f(Y)]=0$ for all $f \in \mathcal{S}(\R)$ is a non-trivial task. According to Theorem \ref{thm:PSO_Intersection_NonTrivial} there is at least one non-trivial polynomial differential operator $ S \neq 0$ such that
			\begin{equation*}
				\E[S f(X_k)]=0, \quad \forall \, f \in \mathcal{S}(\R) \supseteq \mathcal{S} (\mathbb{U}), \quad k=1,\ldots,n,
			\end{equation*} 
			where each $\mathbb{D}_{X_k}$ is the support of random variable $X_k$, and the open domain $\mathbb{U} \supseteq \mathbb{D}_{X_1} \cup \cdots \cup \mathbb{D}_{X_n}$.  An example of an operator $0 \neq L \in \PSO (N) \cap \PSO(X)$ is given in \cite[Section 3.1]{a-g-g-Stein-charactrization} when $X$ is distributed as a semi-circular distribution with probability density function $p_X(x)= (2/\pi) \sqrt{1-x^2} \textbf{1}_{[-1,1]}$ supported on the interval $[-1,1]$.
			
   \vspace{2mm}
   
   \noindent{(b)}			  Theorem \ref{thm:PSO_Intersection_NonTrivial} is also related to characterising Stein operators; see Definition \ref{def:Charactrization}. Firstly, in order that a polynomial Stein operator would be useful from a probabilistic point of view, the class $\mathcal{F}$ of test functions must contain functions being non-zero on the support of the underlying target random variable. Secondly, for random variables $X$ and $Y$ as in part (a) of this remark, Theorem \ref{thm:PSO_Intersection_NonTrivial} yields the existence of a polynomial Stein operator $S\not=0$ so that $\E[Sf(X)]=\E[Sf(Y)]=0$ for all $f \in \mathcal{S}(\R)$. Therefore, the Stein operator $S$ cannot be a characterising operator for neither $X$  nor $Y$ without any extra assumption; see also Example \ref{ex:Moment_Assumption_Crucial}.
	} 
\end{rem}

\appendix

\section{Further proofs}\label{appa}

\noindent{{\bf{Proof of Proposition \ref{prop4.5}.}}} The case $m=1$ corresponds to the classical standard Gaussian Stein operator $\partial-x$, so we focus on the case $m\geq2$. Recall that $L_m=\partial^m-H_m(x)$. Then applying the mapping $\mathcal{L}_m=\Psi(L_m)$
we have that $\mathcal{L}_m=\mathrm{i}^mt^m-H_m(-\mathrm{i}\partial)$, and $\mathcal{L}_m\in\mathrm{Ann}_{A_1(\mathbb{C})}(\varphi_N)$. We shall employ the technique for proving sufficiency of Stein characterisations from our recent paper \cite{a-g-g-Stein-charactrization} to prove that $\mathrm{Ker}(\mathcal{L}_m)\cap \widehat{\mathcal{P}} (\R) =\{e^{-t^2/2}\}$ for $m=2,3$ under no additional assumptions on the random variable $X$, and for $m\geq4$ under the assumption that the random variable $X$ is symmetric and has an infinitely divisible distribution.

Consider the ODE 
\begin{equation}\label{ode}\mathcal{L}_m\varphi(t)=0.
\end{equation}
 This ODE has an essential singularity at $t=\infty$, and so we seek asymptotic approximations for the solutions using the ansatz $\varphi(t)=e^{S(1/t)}$. Using this ansatz and the method of dominate balance (see \cite[Chapter 3]{bender} for a detailed account of this method) we deduce that $S'(1/t))^m\sim t^{3m}$, as $t\rightarrow\infty$.
We thus deduce that there are $m$ linearly independent solutions to the ODE (\ref{ode}) with the following asymptotic behaviour as $t\rightarrow\infty$,
\begin{equation*}\varphi_{j,m}(t)=\exp\bigg(-\frac{\omega_m^{j-1}t^2}{2}(1+o(1))\bigg), \quad j=1,\ldots,m,
\end{equation*}
where $\omega_m$ is the $m$-th root of unity (recall that $1,\omega_m,\ldots, \omega_m^{m-1}$ are the $m$ distinct roots of the polynomial $x^m-1$). The general solution of the ODE (\ref{ode}) is given by 
\begin{equation}\label{gensol}\varphi(t)=\sum_{j=1}^{m}C_{j,m}\varphi_{j,m}(t)
\end{equation}
 for some arbitrary constants $C_{1,m},\ldots,C_{m,m}$. Note that $\varphi_{1,m}(t)=\exp(-(t^2/2)(1+o(1)))$, as $t\rightarrow\infty$; this asymptotic behaviour is consistent with that of the characteristic function $e^{-t^2/2}$ of the standard Gaussian distribution. If we can argue that $\varphi_m(t)$ can only be a characteristic function if $C_{2,m}=\cdots=C_{m,m}=0$, then it follows that $\mathrm{Ker}(\mathcal{L}_m)\cap \widehat{\mathcal{P}} (\R) =\{e^{-t^2/2}\}$, meaning that $L_m$ is a characterising Stein operator.

We first consider the case $m=2$. In this case, $\varphi_{2,2}(t)=\exp((t^2/2)(1+o(1)))$, as $t\rightarrow\infty$, and so this solution is unbounded as $t\rightarrow\infty$. Since characteristic functions are bounded, in order for $\varphi_2(t)$ to be a characteristic function we must take $C_{2,2}=0$, and so $L_2$ is a characterising Stein operator. Similarly, the solutions $\varphi_{2,3}(t)$ and $\varphi_{3,3}(t)$ blow up as $t\rightarrow\infty$, and so we deduce by the same argument that $L_3$ is a characterising Stein operator.

Suppose now that $m\geq4$. It is no longer the case that all solutions except for $\varphi_{1,m}(t)$ are unbounded as $t\rightarrow\infty$, and so the argument used to treat the cases $m=2,3$ is no longer enough to deduce that the operator $L_m$ is characterising. To deal with this difficulty, we introduce the additional assumptions that the distribution of $X$ is symmetric and infinitely divisible. We shall exploit the properties that the characteristic function $\varphi(t)$ of an infinitely divisible distribution is non-zero for all $t$, and that the characteristic function of a symmetric random variable is real-valued.

Suppose $m\geq4$ is even; the odd $m$ case is similar, and so is omitted. 
Let $R_{j,m}$ and $I_{j,m}$ denote the real and imaginary parts of $\omega^{j-1}$ for $j=1,\ldots,m$. Then applying Euler's formula $e^{\mathrm{i}x}=\cos(x)+\mathrm{i}\sin(x)$ to the solution (\ref{gensol}) gives the following alternative representation of the general solution:
  \[\varphi_m(t)=\sum_{j=1}^m a_{j,m} g_{j,m}(t),\]
where $a_{1,m},\ldots,a_{m,m}$ are arbitrary constants, and, as $t\rightarrow\infty$,
\begin{align*}g_{1,m}(t)&=\exp\bigg(-\frac{t^2}{2}(1+o(1))\bigg), \quad g_{m/2,m}(t)=\exp\bigg(\frac{t^2}{2}(1+o(1))\bigg), \\
g_{j,m}(t)&=\sin\bigg(\frac{I_{j,m}t^2}{2}(1+o(1))\bigg)\exp\bigg(-\frac{R_{j,m}t^2}{2}(1+o(1))\bigg), \quad j=2,\ldots,m/2-1,\\
g_{j,m}(t)&=\cos\bigg(\frac{I_{j,m}t^2}{2}(1+o(1))\bigg)\exp\bigg(-\frac{R_{j,m}t^2}{2}(1+o(1))\bigg), \quad j=m/2+1,\ldots,m.
\end{align*}
Now, $g_{m/2}(t)\rightarrow\infty$ as $t\rightarrow\infty$, so we must take $a_{m/2,m}=0$. We note that $I_{j,m}\not=0$ and $R_{j,m}<1$ for $j\in\{2,\ldots,m/2-1\}\cup\{m/2+1,\ldots,m\}$. Moreover, the symmetric distribution assumption ensures that the solution $\varphi_m(t)$ is real-valued. Due to the infinitely divisible distribution assumption, $\varphi_m(t)$ cannot equal to zero for any $t\in\R$, and we are now forced to take $a_{2,m}=\cdots=a_{m,m}=0$, which implies that the Stein operator $L_m$ is characterising.
\hfill $\Box$

\vspace{3mm}

\noindent{{\bf{Proof of Proposition \ref{prop4.6}.}}} Following the same steps as in the proof of Proposition \ref{prop4.5} yields that the general solution to the ODE $\mathcal{R}_m\varphi_m(t)=0$ is given by 
$\varphi_m(t)=\sum_{j=1}^mB_{j,m}\tilde{\varphi}_{j,m}(t)$
for some arbitrary constants $B_{1,m},\ldots,B_{m,m}$, and, as $t\rightarrow\infty$, 
\begin{equation*}\tilde{\varphi}_{j,m}(t)=\exp\bigg(-\frac{\omega_m^{j-1}t^2}{2}(1+o(1))\bigg), \quad j=1,\ldots,m.
\end{equation*}
To leading order, the linearly independent solutions $\tilde{\varphi}_{j,m}(t)$, $j=1,\ldots,m$, have precisely the same asymptotic behaviour as the linearly independent solutions $\varphi_{j,m}(t)$, $j=1,\ldots,m$, from the proof of Proposition \ref{prop4.5}. Reasoning exactly as we did in the proof of Proposition \ref{prop4.5} now completes the proof.
\hfill $\Box$

\subsection*{Acknowledgements}
Parts (a) and (b) of Proposition \ref{prop:PSO_General_Targets}, and Proposition \ref{holprop} were inspired by an excellent suggestion of one of the reviewers of our paper \cite{a-g-g-algebraic-stein-operators}, who commented on the connection between Stein's method and holonomic function theory. RG is funded in part by EPSRC grant EP/Y008650/1 and and EPSRC grant UKRI068. 



\end{document}